	\def\captionfont{\setb@se{11pt}\protect\footnotesize}
    \def\captionfont{\protect\footnotesize}
    \newcommand{\iprd}[2]{\left( #1 , #2 \right)}
    \newcommand{\aiprd}[2]{a\!\left( #1 , #2 \right)}
    \newcommand{\Soh}{\mathring{S}_h}
    \newcommand{\bI}{{\bf I}}
	\def\norm#1#2{\left\| #1 \right\|_{#2}}
	\newcommand{\avephio}{\overline{\phi}_0}
	\newcommand{\cirphih}{\mathring{\phi}_h}
	\newcommand{\avemuh}{\overline{\mu}_h}
	\newcommand{\newtmuh}{\mathring{\mu}_{h,j}}
	\newcommand{\newtphih}{\mathring{\phi}_{h,j}}
	\newcommand{\cirmuh}{\mathring{\mu}_h}
	\newcommand{\dtau}{\delta_\tau}
	\newcommand{\phih}{\phi_h}
	\newcommand{\bv}{{\bf v}}
	\newcommand{\muh}{\mu_h}
	\newcommand{\bP}{{\bf P}}
	\newcommand{\bK}{{\bf K}}
	\newcommand{\bM}{{\bf M}}
	\newcommand{\bc}{{\bf c}}
	\newtheorem{thm}{Theorem}[section]
	\newtheorem{rem}[thm]{Remark}
\def\tK{\tilde{\bf K}}
\def\bI{{\bf I}}
\def\bB{{\bf B}}
\def\bC{{\bf C}}
\def\bv{{\bf v}}
\def\bV{{\bf V}}
\def\d{\displaystyle}
\begin{document}
\title{A Robust Solver for a Mixed Finite Element Method for the
 Cahn-Hilliard Equation\thanks{The work of the first and third authors
 was supported in part by the National Science
  Foundation under Grant No. DMS-16-20273.}}
	\author{
Susanne C. Brenner\thanks{Department of Mathematics and Center for Computation \& Technology,
 Louisiana State University, Baton Rouge, LA 70803 (brenner@math.lsu.edu)},
	\and
Amanda E. Diegel\thanks{Department of Mathematics  and Center for Computation \& Technology,
 Louisiana State University, Baton Rouge, LA 70803 (diegel@math.lsu.edu)},
	\and
Li-Yeng Sung\thanks{Department of Mathematics and Center for Computation \& Technology,
 Louisiana State University, Baton Rouge, LA 70803 (sung@math.lsu.edu)}}

\date{}
\maketitle
	
\numberwithin{equation}{section}
	
\begin{abstract}\noindent
 We develop a robust solver for a mixed finite element convex splitting scheme
 for the Cahn-Hilliard equation.  The key ingredient of the solver is a preconditioned
 minimal residual algorithm (with a multigrid preconditioner) whose performance
 is independent of the spacial mesh size and the
  time step size for a given interfacial width parameter.  The dependence on the interfacial
  width parameter is also mild.
\end{abstract}
\par\quad
\begin{minipage}{5.8in}
 {\bf Keywords} \quad Cahn-Hilliard equation; convex splitting;  mixed finite element methods;
 MINRES; block diagonal preconditioner; multigrid.
\end{minipage}
%
\section{Introduction}\label{sec:Introduction}
 Let $\Omega\subset \mathbb{R}^d$, $d=2,3$, be an open polygonal or polyhedral domain,
  and consider
 the following form of the Cahn-Hilliard energy \cite{cahn61}:
\begin{align}
 E(\phi) = \int_{\Omega} \left(\frac{1}{4 \varepsilon} (\phi^2-1)^2 +
 \frac{\varepsilon}{2} | \nabla \phi |^2\right) d{x},
\label{eq:ch-energy}
\end{align}
 where  $\varepsilon >0$ is a constant, and $\phi \in H^1(\Omega)$ represents a
 concentration field. The phase equilibria are represented by $\phi = \pm 1$ and
 the parameter $\varepsilon$ represents a non-dimensional interfacial
 width between the two phases.
\par
 The Cahn-Hilliard equation, which can be interpreted as the gradient flow of the energy
 \eqref{eq:ch-energy} in the dual space of $H^1(\Omega)$,
 is often represented in mixed form by
\begin{subequations}\label{subeqs:CH}
	\begin{eqnarray}
 \partial_t \phi = \varepsilon\Delta \mu,  && \text{in} \,\Omega ,
	\label{eq:CH-mixed-a-alt}
	\\
 \mu = \varepsilon^{-1}\, \left(\phi^3 - \phi\right) -
 \varepsilon \Delta \phi, && \text{in} \,\Omega,
	\label{eq:CH-mixed-b-alt}
	\end{eqnarray}
\end{subequations}
 together with the boundary conditions $\partial_n \phi =0$
 and $\partial_n \mu = 0$.
\par
 Let $T$ be a positive number and  $H^{-1}_N(\Omega)$ be the dual space of $H^1(\Omega)$.
 A weak formulation of \eqref{eq:CH-mixed-a-alt}--\eqref{eq:CH-mixed-b-alt} is to
  find $(\phi,\mu)$ such that
\begin{subequations}\label{subeqs:Spaces}
	\begin{eqnarray}
\phi &\in& L^\infty\left(0,T;H^1(\Omega)\right)\cap
 L^4\left(0,T;L^\infty(\Omega)\right),	\\
\partial_t \phi &\in&  L^2\bigl(0,T; H_N^{-1}(\Omega)\bigr),
	\\
\mu &\in& L^2\bigl(0,T;H^1(\Omega)\bigr),
	\end{eqnarray}
\end{subequations}
 and, for almost all $t\in (0,T)$,
\begin{subequations}\label{subeqs:Equations}
	\begin{align}
\langle \partial_t \phi ,\nu \rangle + \varepsilon \,\aiprd{\mu}{\nu}
 &= 0  \quad \forall \,\nu \in H^1(\Omega),
	\label{eq:weak-ch-a}\\
\iprd{\mu}{\psi}-\varepsilon \,\aiprd{\phi}{\psi} -
 \varepsilon^{-1}\iprd{\phi^3-\phi}{\psi} &= 0  \quad \forall \,\psi\in H^1(\Omega).
	\label{eq:weak-ch-b}
	\end{align}
\end{subequations}
 Here $\langle \cdot, \cdot \rangle$ denotes the duality pairing between the spaces
 $H^{-1}_N(\Omega)$ and $H^1(\Omega)$, $(\cdot,\cdot)$ is the inner product of $L^2(\Omega)$,
  and
\begin{equation*}
  a({u},{v})=(\nabla u,\nabla v).
 \end{equation*}
 The proof for the existence and uniqueness of the weak solution for \eqref{subeqs:Spaces}--\eqref{subeqs:Equations}
 with initial data
\begin{equation}\label{eq:InitialData}
 \phi(0)=\phi_0\in \,H^2_N(\Omega)=\{v\in H^2(\Omega):\,\partial v/\partial n=0
  \; \text{on}\; \partial\Omega\}
\end{equation}
 can be found for example in \cite{Temam:1988:DynamicalSystem}.
\par
 The Cahn-Hilliard energy \eqref{eq:ch-energy} along with the system \eqref{subeqs:CH}
  was originally developed
 to model phase separation of a binary fluid \cite{cahn61,cahn58,elliott86}.
 However, variations of the Cahn-Hilliard system are quickly becoming one
 of the most popular components in what are known as phase field models.
 The role that the Cahn-Hilliard equation takes in these models may best
 be described as creating an indicator function so that explicit tracking
 of the interface between two phases is not required.
 The growing number of applications include two phase flows, Hele-Shaw flows, copolymer fluids,
 crystal growth, void electromigration, vesicle membranes and more (cf.
 \cite{feng06,lee02a,choksi11,van09,barrett05,du07} and the references therein).
\par
 There is a vast literature on numerical methods for the Cahn-Hilliard equation
 (cf. \cite{tierra15,diegel16,lee14b,wang16,zhou15} and the
 references therein) and solvers  based on various numerical schemes
 were developed in
 \cite{axelsson13,boyanova12,ceniceros07,kay06,kim07,kim04,
 stogner08,wise10,shin13,lee14,wang16}.
 We will consider
 the mixed finite element method for
 \eqref{subeqs:Spaces}--\eqref{eq:InitialData} investigated in
 \cite{diegel15}.  It is
 based on the convex splitting scheme in time \cite{eyre98} given by
\begin{subequations}\label{subeqs:ConvexSplitting}
\begin{align}
 \frac{\phi^{m} - \phi^{m-1} }{\tau}= \varepsilon \Delta \mu^m,\\
 \Delta \mu^m = \frac{1}{\varepsilon} \left((\phi^m)^3 - \phi^{m-1}\right)
 - \varepsilon \Delta \phi^m,
\end{align}
\end{subequations}
  where $\tau$ is the size of the time step, and a spatial discretization that employs
   Lagrange finite elements.  This mixed finite element method is unconditionally stable and
   has optimal convergence in both time and space.
    Our goal is to develop a robust solver for this mixed finite element method.
\par
 The remainder of this paper is organized as follows.  The mixed finite element method
 is introduced in Section~\ref{sec:MFEM}, followed by the construction
 and analysis of the  solver
 in Section \ref{sec:Solver}.
 Numerical results that demonstrate the performance of the
 solver are presented in Section \ref{sec:Numerics}, and we end
 the paper with some concluding remarks in Section~\ref{sec:Conclusions}.
%
\section{A Mixed Finite Element Method}\label{sec:MFEM}
 Let $M$ be a positive integer, $0=t_0 < t_1 < \cdots < t_M = T$
  be a uniform partition of $[0,T]$ and
 $\mathcal{T}_h$ be a quasi-uniform family of triangulations of $\Omega$
 (cf. \cite{BScott:2008:FEM}).
 The Lagrange finite element space $S_h\subset H^1(\Omega)$ is given by
\begin{equation*}
  S_h=\{v\in C(\bar\Omega):\, v|_K \in {\mathcal P}_1(K)\,
  \forall \,\,  K\in \mathcal{T}_h \},
\end{equation*}
 and we define
\begin{equation*}
\Soh = S_h\cap L_0^2(\Omega),
\end{equation*}
 where $L_0^2(\Omega)$ is the space of square integrable functions with zero mean.
\par
 The mixed finite element scheme for \eqref{subeqs:ConvexSplitting}
 investigated in \cite{diegel15}
 is defined as follows:  For $1\le m\le M$, find $\phih^m,\muh^m\in S_h$  such that
\begin{subequations}\label{subeqs:Schemea}
	\begin{align}
 \iprd{\dtau \phih^m}{\nu} + \varepsilon \,\aiprd{\muh^m}{\nu}
    &=  0  \, & \forall \, \nu \in S_h ,
	\label{eq:ch-scheme-a}
	\\
 \iprd{\muh^m}{\psi}-\varepsilon^{-1} \,\big({\left(\phih^m\right)^3
 -\phih^{m-1}},{\psi}\big)
  -\varepsilon \,\aiprd{\phih^m}{\psi}  &=
  0  \,&\forall \, \psi\in S_h ,
	\label{eq:ch-scheme-b}\\
	\phih^0 -R_h \phi_0&=0.\label{eq:ch-scheme-c}
	\end{align}
	\end{subequations}
 Here
	\begin{equation*}
\dtau \phih^{m} = \frac{\phih^m-\phih^{m-1}}{\tau},
	\end{equation*}
 where $\tau=T/M$ is the size for the time step, and the Ritz projection operator $R_h:H^1(\Omega)\longrightarrow S_h$ is
 defined by
\begin{subequations}\label{subeqs:Rh}
\begin{align}
  a(R_hv,w)&=(v,w) \qquad \forall\,w\in S_h, \\
  (R_h v-v,1)&=0.  \label{eq:RhMean}
\end{align}
\end{subequations}
\begin{rem}\label{rem:Properties}
  The energy law
\begin{align*}
 E(\phi(t)) + \int_0^t \varepsilon \norm{\nabla \mu(s)}{L^2(\Omega)}^2 ds = E(\phi(0))
 \qquad\forall\,t\in[0,T]
\end{align*}
 is a key property of the solution of \eqref{subeqs:Spaces}--\eqref{eq:InitialData}.
 It can be shown {\rm\cite{diegel15}}
  that the solution of the finite element method defined by
 \eqref{subeqs:Schemea}--\eqref{subeqs:Rh} also satisfies a
 similar energy law, which leads to
  $\phih \in L^{\infty}(0,T;L^{\infty}(\Omega))$ and
  $\muh \in L^{2}(0,T;L^{\infty}(\Omega))$.
Moreover, under the assumption that
$\phi \in H^2(0,T;L^2(\Omega)) \cap L^{\infty}(0,T;W^{1,6}(\Omega)) \cap H^1(0,T;H^2(\Omega)), \mu \in L^{\infty}(0,T;H^1(\Omega)) \cap L^2(0,T;H^2(\Omega))$,
and $ 0 \le \tau \le \tau_0$ for a sufficiently small $\tau_0$, the error estimate
\begin{align}\label{eq:ErrorEstimate}
\max\limits_{1 \le m \le M} \norm{\nabla \phi^m - \nabla \phih^m}{L^2}^2
+ \tau \sum_{m=1}^M \norm{\nabla \mu^m - \nabla \muh^m}{L^2}^2
\le C(\varepsilon,T) (\tau^2 + h^2)
\end{align}
 holds for a positive constant $C$ that depends on $\varepsilon$ and $T$
 but does not depend on $\tau$ and $h$.
\end{rem}
\par
 It follows from \eqref{subeqs:Schemea} that
     $(\phih^m,1)=(\phi_0,1)$ for $0\leq m\leq M$,
 and hence
\begin{equation}\label{eq:ave-phi-def}
\phih^m = \avephio + \cirphih^m \quad \text{for} \quad 0\leq m \leq M,
\end{equation}
 where $\avephio = {\iprd{\phi_0}{1}}/{\iprd{1}{1}}$ is the mean of
 $\phi_0$ over $\Omega$ and $\cirphih \in \Soh$.
 We can also write
\begin{equation}\label{eq:ave-mu-def}
\muh^m = \avemuh^m + \cirmuh^m,
\end{equation}
 where $\avemuh^m$ is a constant function and $\cirmuh^m \in \Soh$.
\par
 Using \eqref{eq:ave-phi-def} and \eqref{eq:ave-mu-def}, we can rewrite
 \eqref{eq:ch-scheme-a}--\eqref{eq:ch-scheme-b}  in the following equivalent form:
  For $1\le m\le M$, find $\cirphih^m,\cirmuh^m\in \Soh$ such that
\begin{subequations}\label{subeqs:Schemeb}
\begin{align}
\big({\dtau \cirphih^m},{\nu} \big)+ \varepsilon \,\aiprd{\cirmuh^m}{\nu} =
\,& 0  \, & \forall \, \nu \in \Soh ,
	\label{eq:chmz-scheme-a}
	\\
({\cirmuh^m},{\psi})-\varepsilon^{-1} \,\big({(\cirphih^m + \avephio)^3
 -\cirphih^{m-1}},{\psi} \big)
 -\varepsilon \,a({\cirphih^m},{\psi})= \,& 0  \,&\forall \, \psi\in \Soh ,
	\label{eq:chmz-scheme-b}
\end{align}
\end{subequations}
 where
\begin{equation*}
\dtau \cirphih^{m} = \frac{\cirphih^m-\cirphih^{m-1}}{\tau} .
\end{equation*}
\par
 Note that  we can recover the constant function $\avemuh^m$  from $\cirphih^m$ and $\cirphih^{m-1}$
  through the relation
$$\iprd{\avemuh^m}{1} = \varepsilon^{-1} \big((\cirphih^m + \avephio)^3 -\cirphih^{m-1},{1}\big)$$
 that follows from \eqref{eq:ch-scheme-b}, \eqref{eq:ave-phi-def} and \eqref{eq:ave-mu-def}.
\begin{rem}\label{rem:unique-solvability}
 The nonlinear system \eqref{subeqs:Schemeb}
 defines the first order optimality condition
 at a minimum $\cirphih^m$ of the  convex functional
 $\Phi:\Soh\longrightarrow \mathbb{R}$ given by
\begin{align*}
 \Phi_h(\varphi) = \frac{\varepsilon}{2\tau} \aiprd{\varrho}{\varrho}
 + \frac{1}{4\varepsilon} \norm{\varphi + \avephio}{L^4}^4 - \frac{1}{\varepsilon} \iprd{\cirphih^{m-1}}{\varphi}
  + \frac{\varepsilon}{2} \aiprd{\varphi}{\varphi},
\end{align*}
 where $\varrho \in \Soh$ is defined by
\begin{align*}
\varepsilon \, \aiprd{\varrho}{\nu} + \iprd{\varphi - \cirphih^{m-1}}{\nu} = 0
\quad \forall \,\nu \in \Soh.
\end{align*}
 Since this convex minimization problem has a unique minimum by the
 standard theory {\rm \cite{ET:1999:Convex}}, the system
 \eqref{subeqs:Schemeb} $($and hence \eqref{subeqs:Schemea}$)$ is also uniquely solvable.
\end{rem}
\section{A Robust Solver}\label{sec:Solver}
 We will solve the nonlinear system \eqref{subeqs:Schemeb} by Newton's iteration.  Let
 $(\newtphih^m,\newtmuh^m)\in \Soh\times\Soh$ be the output of the $j$-th step.
    In order to advance the iteration, we need to find
 $(\delta_j \mathring{\mu}, \delta_j \mathring{\phi}) \in \Soh \times \Soh$ such that
\begin{subequations}\label{subeqs:Jacobian1}
\begin{alignat}{3}
\tau  \varepsilon \, a({\delta_j \mathring{\mu}},{\nu}) + (\nu,\delta_j \mathring{\phi})
 &= F_j(\nu) &\quad& \forall\, \nu \in \Soh,\\
 ({\delta_j \mathring{\mu}},{\psi}) - \left[3 \varepsilon^{-1}
 \big({(\phi_{h,j}^m)^2 \delta_j \mathring{\phi}},{\psi}\big)
 + \varepsilon \,a({\delta_j \mathring{\phi}},{\psi})\right]
 &= G_j(\psi) &\quad& \forall\, \phi \in \Soh,
\end{alignat}
\end{subequations}
where $\phi_{h,j}^m = \mathring{\phi}_{h,j}^m + \avephio$ and
\begin{subequations}\label{subeqs:Jacobian2}
\begin{align}
 F_j(\nu) &= \tau \varepsilon \, a({\mathring{\mu}_{h,j}^m},{\nu} )
+ ({\mathring{\phi}_{h,j}^m - \mathring{\phi}_{h,j}^{m-1}},{\nu}),
\\
G_j(\psi) &= ({\mathring{\mu}_{h,j}^m},{\psi})-
\left[ \varepsilon^{-1} \big({(\phi_{h,j}^m)^3 -
\mathring{\phi}_{h,j}^{m-1}},{\psi}\big)
 + \varepsilon \, a({\mathring{\phi}_{h,j}^{m}},{\psi}) \right].
\end{align}
\end{subequations}
 The next output of the Newton iteration is then given by
\begin{align}\label{eq:NextOutput}
  (\mathring{\mu}_{h,j+1}^m,\mathring{\phi}_{h,j+1}^m)
= (\mathring{\mu}_{h,j}^m,\mathring{\phi}_{h,j}^m) -
       (\delta_j \mathring{\mu}, \delta_j \mathring{\phi}).
\end{align}
 Below we will construct a robust solver for \eqref{subeqs:Jacobian1}.
\par
 First  we circumvent the inconvenient zero mean constraint by reformulating
 \eqref{subeqs:Jacobian1} as the following equivalent problem:
 Find $(\delta_j \mu, \delta_j \phi ) \in S_h \times S_h$ such that
\begin{subequations}\label{subeqs:NewJacobian1}
\begin{alignat}{3}
\tau \varepsilon \big[ \aiprd{\delta_j \mu}{\nu} +
\iprd{\delta_j \mu}{1} \iprd{\nu}{1} \big]
+ \iprd{\nu}{\delta_j \phi} &=
\tilde{F}_j(\nu) &\quad& \forall\, \nu \in S_h,\\
\iprd{\delta_j \mu}{\psi} - \left[3 \varepsilon^{-1}
\big({(\phi^m_{h,j})^2 \delta_j \phi},{\psi}\big) +
\varepsilon \big[a({\delta_j \phi},{\psi}) +
\iprd{\delta_j \phi}{1} \iprd{\psi}{1} ] \right]
 &= \tilde{G}_j(\psi) &\quad& \forall\, \psi \in S_h,
\end{alignat}
\end{subequations}
 where
\begin{equation}\label{eq:NewJacobian2}
\tilde{F}_j(\nu) =
\begin{cases}
F_j(\nu) & \text{if } \nu \in \Soh \\
0 & \text{if } \nu=1
\end{cases}
\quad\text{and}\quad
\tilde{G}_j(\psi) =
\begin{cases}
G_j(\psi) & \text{if } \psi \in \Soh \\
0 & \text{if } \psi=1
\end{cases}.
\end{equation}
\par
\begin{rem}\label{rem:Equivalence}
 It is easy to check that both \eqref{subeqs:Jacobian1} and
 \eqref{subeqs:NewJacobian1} are well-posed
 linear systems and that the solution
 $(\delta_j \mathring{\mu}, \delta_j \mathring{\phi})$ of
 \eqref{subeqs:Jacobian1}--\eqref{subeqs:Jacobian2} also satisfies
 \eqref{subeqs:NewJacobian1}--\eqref{eq:NewJacobian2}.
\end{rem}
\par
 Under the change of variables
\begin{equation*}
\iprd{\delta_j \mu}{\nu}  \rightarrow \tau^{-\frac{1}{4}}
\varepsilon^{-\frac{1}{2}} \iprd{\delta_j \mu}{\nu}
 \quad\text{and} \quad
 \iprd{\delta_j \phi}{\psi}  \rightarrow \tau^{\frac{1}{4}}
 \varepsilon^{\frac{1}{2}} \iprd{\delta_j \phi}{\psi},
\end{equation*}
 the system \eqref{subeqs:NewJacobian1} becomes
\begin{subequations}\label{subeqs:CV}
\begin{align}
\tau^{\frac{1}{2}} \left[ \aiprd{\delta_j \mu}{\nu} +
 ({\delta_j \mu},{1}) ({\nu},{1}) \right]
+ ({\nu},{\delta_j \phi}) &= \tilde{F}_j(\tau^{-\frac{1}{4}}
 \varepsilon^{-\frac{1}{2}} \nu),\\
 ({\delta_j \mu},{\psi}) - \big[3 \tau^{\frac{1}{2}}
 \big({ (\phi^m_{h,j})^2 \delta_j \phi},{\psi} \big)
+ \tau^{\frac{1}{2}}  \varepsilon^2 \left[ a({\delta_j \phi},{\psi})
 + ({\delta_j \phi},{1} )({\psi},{1}) \right] \big]
&= \tilde{G}_j(\tau^{\frac{1}{4}} \varepsilon^{\frac{1}{2}}\psi),
\end{align}
	\label{eq:unconstrained-final}
\end{subequations}
 for all $(\nu,\psi)\in S_h\times S_h$.
\par
 Let $n_h$ be the dimension of $S_h$ and
 $\varphi_1,\ldots,\varphi_{n_h}$  be the standard nodal basis (hat) functions for $S_h$.
 The system matrix for \eqref{subeqs:CV} is given by
\begin{equation}\label{eq:SystemMatrix}
\begin{bmatrix}
\tau^\frac{1}{2} \left(\bK + \bc \bc^t\right) & \bM \\
\bM & -\tau^\frac{1}{2} {\bf J} (\phi_{h,j}^m) -
\tau^\frac{1}{2}\varepsilon^2 \left(\bK + \bc \bc^t\right)
\end{bmatrix},
\end{equation}
 where the stiffness matrix $\bf K$ is  defined by
 ${\bf K}(k,\ell)=(\nabla\varphi_k,\nabla\varphi_\ell)$,
 the mass matrix $\bf M$ is  defined by
  ${\bf M}(k,\ell)=(\varphi_k,\varphi_\ell)$, the vector $\bf c$ is  defined by
 ${\bf c}(k)=(\varphi_k,1)$, and the matrix ${\bf J} (\phi_{h,j}^m)$ is defined by
   $${\bf J} (\phi_{h,j}^m)(k,\ell)=3\big((\phi^m_{h,j})^2\varphi_k,\varphi_\ell\big).$$
\par
 Note that, since the mixed finite element method is convergent, we can expect
 $(\phi^m_{h,j})^2$ to be close to 1 away from an interfacial region with
  width $\varepsilon$.
 Therefore, for small $\varepsilon$, we can take $(\phi^m_{h,j})^2$ to be 1
 in the system matrix, i.e., we can
  replace ${\bf J} (\phi_{h,j}^m)$ by $3{\bf M}$ in \eqref{eq:SystemMatrix}.
  The following result is motivated
  by this observation.
\begin{thm}\label{thm:Preconditioner}
 Let the matrices $\bf B$ and $\bf P$ be defined by
\begin{align}
{\bf B}&=\begin{bmatrix}
\tau^\frac{1}{2} (\bK + \bc \bc^t) & \bM \\
\bM & -3\tau^\frac{1}{2} \bM - \tau^\frac{1}{2}\varepsilon^2 (\bK + \bc \bc^t)
\end{bmatrix},\label{eq:block-system-simple}\\
\bP &=
 \begin{bmatrix}
\tau^\frac{1}{2}( \bK+\bc\bc^t) + \bM & {\bf 0} \\
{\bf 0} & \tau^\frac{1}{2} \varepsilon^2 (\bK+\bc\bc^t)+ \bM
\end{bmatrix},
\label{eq:preconditioner}
\end{align}
 where $0\leq \tau,\varepsilon\leq 1$.
 There exist two positive constants $C_1$ and $C_2$ independent of
 $\varepsilon$, $h$ and $\tau$ such that
\begin{equation}\label{eq:eigenvalue-condition}
C_2 \max(\tau^\frac{1}{2},\varepsilon) \le|\lambda | \le C_1 \quad
\text{\rm for any eigenvalue } \lambda \text{ of } \bP^{-1} {\bf B}.
\end{equation}
\end{thm}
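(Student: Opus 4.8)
The plan is to use the symmetry of $\bB$ and the positive definiteness of $\bP$ to reduce the generalized eigenvalue problem $\bB\bv=\lambda\bP\bv$ to a family of explicit $2\times 2$ problems that can be analyzed by hand. First I would set $\bA=\bK+\bc\bc^t$ and note that $\bA$ is symmetric positive definite, since for any nodal vector $y$ representing $v_h\in S_h$ one has $y^t\bA y=\|\nabla v_h\|^2+(v_h,1)^2$, which vanishes only when $v_h\equiv 0$; likewise $\bM$ is symmetric positive definite. Hence the pencil $(\bA,\bM)$ admits an invertible matrix $W$ of generalized eigenvectors with $W^t\bM W=\bI$ and $W^t\bA W=\Sigma:=\mathrm{diag}(\sigma_1,\dots,\sigma_{n_h})$, all $\sigma_i>0$. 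Writing $s=\tau^{\frac12}$, conjugation of the pencil by $\mathrm{diag}(W,W)$ sends $\bB$ and $\bP$ to
\[
\begin{bmatrix} s\Sigma & \bI \\ \bI & -3s\bI-s\varepsilon^2\Sigma\end{bmatrix}
\quad\text{and}\quad
\begin{bmatrix} s\Sigma+\bI & 0 \\ 0 & s\varepsilon^2\Sigma+\bI\end{bmatrix},
\]
which are preserved under reordering into $2\times 2$ blocks. Since this conjugation is a similarity of $\bP^{-1}\bB$, the spectrum of $\bP^{-1}\bB$ is exactly the union over $i$ of the spectra of the pencils $(\bB_{\sigma_i},\bP_{\sigma_i})$ with
\[
\bB_\sigma=\begin{bmatrix} s\sigma & 1 \\ 1 & -3s-s\varepsilon^2\sigma\end{bmatrix},\qquad
\bP_\sigma=\begin{bmatrix} s\sigma+1 & 0 \\ 0 & s\varepsilon^2\sigma+1\end{bmatrix}.
\]

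Next I would analyze this $2\times 2$ pencil for arbitrary $\sigma>0$ and $s,\varepsilon\in[0,1]$. The characteristic polynomial $\det(\bB_\sigma-\lambda\bP_\sigma)$ has leading coefficient $\det\bP_\sigma=(s\sigma+1)(s\varepsilon^2\sigma+1)=:D>0$ and constant term $\det\bB_\sigma=-(3s^2\sigma+s^2\varepsilon^2\sigma^2+1)=:-N<0$. Thus the two eigenvalues are real with negative product $\lambda_1\lambda_2=-N/D$, so each pencil contributes one positive and one negative eigenvalue and $|\lambda_1\lambda_2|=N/D=:Q>0$; this also recovers the realness of the full spectrum, which follows anyway from $\bB$ symmetric and $\bP$ SPD.

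For the upper bound I would estimate the Rayleigh quotient directly: for $z=(a,b)^t$, using $2|ab|\le a^2+b^2$ and $s\le 1$,
\[
|z^t\bB_\sigma z|\le s\sigma a^2+(a^2+b^2)+3sb^2+s\varepsilon^2\sigma b^2\le (s\sigma+1)a^2+4(s\varepsilon^2\sigma+1)b^2\le 4\,z^t\bP_\sigma z,
\]
so $|\lambda|\le C_1$ with $C_1=4$ for every eigenvalue. For the lower bound I would combine $\min(|\lambda_1|,|\lambda_2|)=Q/\max(|\lambda_1|,|\lambda_2|)\ge Q/C_1$ with the claim that $Q\ge c\,\max(s,\varepsilon)$ for a universal constant $c$, which yields $|\lambda|\ge (c/C_1)\max(\tau^{\frac12},\varepsilon)=:C_2\max(\tau^{\frac12},\varepsilon)$, as required in \eqref{eq:eigenvalue-condition}.

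The hard part is the estimate $Q=N/D\ge c\max(s,\varepsilon)$, which is what forces the precise scaling on the right-hand side of \eqref{eq:eigenvalue-condition}; the key point is that it must hold uniformly in the mesh-dependent parameter $\sigma>0$, and this uniformity is the source of robustness in $h$. I would prove $\max(s,\varepsilon)\,D\le 3N$ term by term. With $\theta=\max(s,\varepsilon)\le 1$ and $D=s^2\varepsilon^2\sigma^2+s\sigma+s\varepsilon^2\sigma+1$, the first and last terms of $\theta D$ are bounded by the terms $s^2\varepsilon^2\sigma^2$ and $1$ of $N$. For the mixed terms $\theta s\sigma$ and $\theta s\varepsilon^2\sigma$ I would split into regimes: when $\theta=s$ each is controlled by $3s^2\sigma$ (using $\varepsilon\le 1$), and when $\theta=\varepsilon$ each is controlled by $\tfrac12(s^2\varepsilon^2\sigma^2+1)$ via $2s\varepsilon\sigma\le s^2\varepsilon^2\sigma^2+1$ (again using $\varepsilon\le 1$). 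Summing the four bounds gives $\theta D\le 3N$, hence $Q\ge\tfrac13\max(\tau^{\frac12},\varepsilon)$ and $C_2=\tfrac{1}{12}$.
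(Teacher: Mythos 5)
Your proposal is correct and follows essentially the same route as the paper: both diagonalize the pencil $(\bK+\bc\bc^t,\bM)$ to reduce $\bP^{-1}\bB$ to $2\times 2$ blocks, obtain the upper bound $|\lambda|\le 4$, and derive the lower bound from the estimate $|\det\bB_\sigma|/\det\bP_\sigma\ge c\,\max(\tau^{\frac12},\varepsilon)$ combined with the upper bound. The only differences are cosmetic: you bound $|\lambda|$ by a Rayleigh-quotient argument where the paper uses $\|\bC\|_\infty\le 4$, and you prove the determinant estimate in one unified case split (getting $C_2=\tfrac{1}{12}$) where the paper proves $|\det\bC|\ge\tau^{\frac12}$ and $|\det\bC|\ge\varepsilon/2$ separately (getting $C_2=\tfrac18$).
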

\begin{proof} A simple calculation shows that
\begin{align*}
 \bP^{-1}\bB&=\left(\begin{bmatrix} \bM & 0\\ 0&\bM\end{bmatrix}
     \begin{bmatrix} \tau^\frac12 \tK +\bI& 0 \\
      0 &\tau^\frac12\varepsilon^2\tK+\bI
 \end{bmatrix}\right)^{-1}
 \left(\begin{bmatrix} \bM & 0\\ 0&\bM\end{bmatrix}
\begin{bmatrix}
   \tau^\frac12\tK &\bI\\
   \bI&-3\tau^\frac12\bI-\tau^\frac12\varepsilon^2\tK
 \end{bmatrix}\right)\\
   &=\begin{bmatrix} \tau^\frac12 \tK +\bI& 0 \\
      0 &\tau^\frac12\varepsilon^2\tK+\bI
 \end{bmatrix}^{-1}
 \begin{bmatrix}
   \tau^\frac12\tK &\bI\\
   \bI&-3\tau^\frac12\bI-\tau^\frac12\varepsilon^2\tK
 \end{bmatrix},
\end{align*}
 where $\tK=\bM^{-1}(\bK+\bc\bc^t)$ and $\bI$ is the $n_h\times n_h$ identity matrix.
\par
 By the spectral theorem, there exist $\bv_1,\ldots,\bv_{n_h}\in\mathbb{R}^{n_h}$ and
 positive numbers $\kappa_1,\ldots,\kappa_{n_h}$ such that
\begin{equation*}
  \tK \bv_j=\kappa_j \bv_j \qquad\text{for}\quad 1\leq j\leq n_h
\end{equation*}
 and
\begin{equation*}
     \bv_j^t\bM\bv_\ell=\begin{cases} 1 &\quad \text{if $j=\ell$}\\
           0 &\quad\text{if $j\neq\ell$} \end{cases} \;.
\end{equation*}
\par
 Observe that the two dimensional space  $\bV_j$ spanned by
\begin{equation*}
  \begin{bmatrix} \bv_j \\ 0 \end{bmatrix} \quad \text{and} \quad
  \begin{bmatrix} 0 \\ \bv_j\end{bmatrix}
\end{equation*}
 is invariant under $\bP^{-1}\bB$ and
\begin{equation*}
 \bP^{-1}\bB\left(\alpha\begin{bmatrix} \bv_j \\ 0 \end{bmatrix} +\beta
     \begin{bmatrix} 0 \\ \bv_j\end{bmatrix}\right)=\gamma
     \begin{bmatrix} \bv_j \\ 0 \end{bmatrix} +\delta
     \begin{bmatrix} 0 \\ \bv_j\end{bmatrix},
\end{equation*}
 where
\begin{equation*}
  \begin{bmatrix} \gamma\\ \delta \end{bmatrix}=
  \begin{bmatrix} \tau^\frac12\kappa_j+1&0\\ 0&\tau^\frac12\varepsilon^2\kappa_j+1
  \end{bmatrix}^{-1}
  \begin{bmatrix} \tau^\frac12\kappa_j & 1\\ 1& -3\tau^\frac12
  -\tau^\frac12\varepsilon^2\kappa_j \end{bmatrix}
  \begin{bmatrix}\alpha \\ \beta \end{bmatrix}.
\end{equation*}
\par
 It follows that the eigenvalues of $\bP^{-1}\bB$ are precisely the eigenvalues of the matrix
\begin{align*}
\bC_j&=\begin{bmatrix}
   \tau^\frac12\kappa_j+1&0\\ 0&\tau^\frac12\varepsilon^2\kappa_j+1
   \end{bmatrix}^{-1}
     \begin{bmatrix}
     \tau^\frac12\kappa_j & 1\\ 1& -3\tau^\frac12-\tau^\frac12\varepsilon^2\kappa_j
     \end{bmatrix}\\
   &=\begin{bmatrix}
    \d  \frac{\tau^\frac12\kappa_j }{\tau^\frac12\kappa_j+1}&\d
    \frac{1}{\tau^\frac12\kappa_j+1}\\[20pt]
   \d  \frac{1}{\tau^\frac12\varepsilon^2\kappa_j+1}& \d
 \frac{-3\tau^\frac12-\tau^\frac12\varepsilon^2\kappa_j }{\tau^\frac12\varepsilon^2\kappa_j+1}
      \end{bmatrix}
\end{align*}
 for $1\leq j\leq n_h$.  Hence we only need to understand the behavior
 of the eigenvalues of the matrix
\begin{equation*}
 \bC=\begin{bmatrix}
  \d  \frac{\omega}{\omega+1}&\d  \frac{1}{\omega+1}\\[20pt]
          \d  \frac{1}{\omega\varepsilon^2+1}& \d
          \frac{-3\tau^\frac12-\omega\varepsilon^2 }{\omega\varepsilon^2+1}
      \end{bmatrix},
\end{equation*}
 where $\omega$ is a positive number and $0<\tau,\varepsilon\leq 1$.
\par
 First of all we have
\begin{equation}\label{eq:P1}
  |\lambda|\leq \|\bC\|_\infty \leq 4
\end{equation}
 for any eigenvalue $\lambda$ of $\bC$, which implies that the second estimate in
 \eqref{eq:eigenvalue-condition} holds for $C_1=4$.
\par
  A direct calculation shows that
\begin{equation*}
 |\det\bC|=\frac{1+3\tau^\frac12\omega+\varepsilon^2\omega^2}
 {1+(1+\varepsilon^2)\omega+\varepsilon^2\omega^2}
 \geq \frac{1+3\tau^\frac12\omega+\varepsilon^2\omega^2}
 {1+2\omega+\varepsilon^2\omega^2}.
\end{equation*}
 On one hand we have
\begin{equation*}
 1+2\omega+\varepsilon^2\omega^2
   \leq \tau^{-\frac12}(1+3\tau^\frac12\omega+\varepsilon^2\omega^2),
\end{equation*}
 which implies
\begin{equation}\label{eq:P2}
  |\det\bC|\geq \tau^\frac12.
\end{equation}
 On the other hand we also have
\begin{equation*}
  1+2\omega+\varepsilon^2\omega^2\leq
  \varepsilon^{-1}(1+2\varepsilon\omega+\varepsilon^2\omega^2)
  \leq 2\varepsilon^{-1}(1+\varepsilon^2\omega^2)
   \leq 2\varepsilon^{-1}(1+3\tau^\frac12\omega+\varepsilon^2\omega^2),
\end{equation*}
 which implies
\begin{equation}\label{eq:P3}
|\det\bC|\geq \frac{\varepsilon}{2}.
\end{equation}
\par
 Putting \eqref{eq:P1}--\eqref{eq:P3} together we see that
\begin{equation*}
  4|\lambda|\geq|\det \bC|\geq \max(\tau^\frac12,\varepsilon/2)
\end{equation*}
 for any eigenvalue $\lambda$ of $\bC$.  Therefore the first estimate in
 \eqref{eq:eigenvalue-condition} holds with $C_2=1/8$.
\end{proof}
\par
 In our numerical experiments we use the preconditioner $\bP_*$ given by
\begin{equation}\label{eq:PStar}
 \bP_*= \begin{bmatrix}
\tau^\frac{1}{2}\bK + \bM & {\bf 0} \\
{\bf 0} & \tau^\frac{1}{2} \varepsilon^2 \bK+ \bM
\end{bmatrix}.
\end{equation}
 Since the two symmetric positive definite
 matrices $\bP$ and $\bP_*$ are spectrally equivalent, we immediately deduce from
 Theorem~\ref{thm:Preconditioner} that there exist two positive constants $C_3$ and $C_4$ independent of
 $\varepsilon$, $h$ and $\tau$ such that
\begin{equation}\label{eq:PrecondEst}
   C_4\max(\tau^\frac12,\varepsilon)\leq |\lambda|\leq C_3
\end{equation}
 for any eigenvalue $\lambda$ of $\bP_*^{-1}\bB$.
\par
 According to \eqref{eq:PrecondEst}, the performance of the  preconditioned
 MINRES algorithm (cf. \cite{Greenbaum:1997:Iterative,ESW:2014:FEFIS})
 for systems involving $\bf B$ is independent of $\tau$ and $h$ for a given $\varepsilon$,
 and also independent of $\varepsilon$ and $h$ for a given $\tau$.
  Similar behavior can also be expected for systems involving the
 matrix in \eqref{eq:SystemMatrix}.  Furthermore, the action of $(\gamma \bK + \bM)^{-1}$
  on a vector can be computed by a multigrid method, which creates large computational savings.
\begin{rem}\label{rem:varepsilon}
 Recall the matrix $\bB$ is obtained from the matrix in \eqref{eq:SystemMatrix} by replacing
 ${\bf J}(\phi_{h,j}^m)$ by $3\bM$ and its justification depends on $\varepsilon$.
 Therefore we expect to see some dependence of
 the performance of the preconditioned MINRES algorithm on $\varepsilon$
 for a given $\tau$.
\end{rem}
\begin{rem}\label{rem:SmallTau}
   When $\tau$ becomes $0$, the matrix
     $$\bB=\begin{bmatrix}
       0 & \bM \\
       \bM & 0
     \end{bmatrix}$$
 is well-conditioned. Therefore the performance of the preconditioned MINRES algorithm
 for systems involving the matrix in \eqref{eq:SystemMatrix}
 will improve as the time step size decreases.
\end{rem}
\begin{rem}\label{rem:SPP}
 Block diagonal preconditioners for saddle point systems are discussed in
 {\rm\cite{BGL:2005:SaddlePoint,MW:2011:SaddlePoint}} and the references therein.
\end{rem}
\section{Numerical Experiments}
\label{sec:Numerics}
 In this section we report the reulsts of
 eight numerical experiments in two and three dimensions.
   All computations were carried out using the FELICITY MATLAB/C++ Toolbox \cite{felicity}
  unless specified otherwise.
\par
 In the first six numerical experiments, we solve \eqref{subeqs:Schemea} on the
 unit square $\Omega=(0,1)^2$ using uniform meshes.
 The initial mesh $\mathcal{T}_0$ is generated by the two diagonals of
 $\Omega$  and the meshes $\mathcal{T}_1,\mathcal{T}_2,\ldots$ are obtained
 from $\mathcal{T}_0$ by uniform refinements.
\par
 For the first five experiments, we use the initial data
\begin{equation}
\phi_{h}^0 = \mathcal{I}_h
\Big[\Big(\frac{1}{2}\Big)[1 - \cos(2\pi x_1)][1 - \cos(2\pi x_2))]- 1\Big] ,
\end{equation}
 where $\mathcal{I}_h :   H^2(\Omega) \longrightarrow S_h$ is the
 standard nodal interpolation operator.
\par
 The system \eqref{subeqs:Schemea} (or equivalently \eqref{subeqs:Schemeb})
 is solved by the Newton iteration with a tolerance
 of $10^{-15}$ for $\|\delta_j\phi\|_{L^\infty(\Omega)}$ or
  a residual tolerance of $10^{-7}$
 for \eqref{subeqs:NewJacobian1}--\eqref{eq:NewJacobian2},
  whichever is satisfied first.  It turns out that only one Newton iteration is needed for
  each time step in all the experiments.
\par
  During each Newton iteration, the systems involving
  \eqref{eq:SystemMatrix} are solved by a preconditioned MINRES
  algorithm with a residual tolerance of $10^{-7}$.  The systems
  involving the preconditioner $\bf P$ are solved by
  a multigrid $V(4,4)$ algorithm that uses the Gauss-Seidel iteration as the smoother
  (cf. \cite{Hackbusch:1985:MMA,TOS:2001:MG}).
  In all our experiments the maximum number of preconditioned MINRES iterations occured
  during the first few time steps after which the number of iterations
  would decrease and level off.
\par
 In the first experiment, we take  $\tau = {0.002}/{64}$
 with a final  time $T=0.04$ for the two interfacial width parameters
 $\varepsilon = 0.0625$ and $\varepsilon=0.001$.
 In Table~\ref{ch-tab-fixed-tau-eps}  we report the average number of preconditioned MINRES
 iterations over all time steps
 along with the average solution
 time per time step as the mesh is refined.
 (The timing mechanism is the `tic toc' command in MATLAB.)
 We observe that the
 performance of the preconditioned MINRES algorithm does not depend on $h$ and the
 solution time per time step grows linearly with the number of degrees of freedom.
 Moreover the solution time roughly doubles as $\varepsilon$ decreases from $0.0625$
 to $0.001$, indicating that the performance of the solver only has a mild dependence
 on $\varepsilon$.
\begin{table}[H]
	\centering
	\begin{tabular}{c|cc|cc}
 & \multicolumn{2}{ c }{$\varepsilon = 0.0625$} &\multicolumn{2}{| c }{$\varepsilon = 0.001$}
 \\
$h$ & MINRES Its. & Time to Solve (s) & MINRES Its. & Time to Solve (s)
	\\
	\hline
$\nicefrac{\sqrt{2}}{8}$ & 20 & 0.042391 & 28  & 0.01786
	\\
$\nicefrac{\sqrt{2}}{16}$ & 21 & 0.070047 & 44  & 0.04537
	\\
$\nicefrac{\sqrt{2}}{32}$ & 23 & 0.156576 & 57  & 0.13569
	\\
$\nicefrac{\sqrt{2}}{64}$ & 24 & 0.444770 & 71  & 0.50508
	\\
$\nicefrac{\sqrt{2}}{128}$ & 25 & 1.752561 & 107 & 3.13307
\\
$\nicefrac{\sqrt{2}}{256}$ & 26 & 6.884936 & 96 & 12.3052
\\
$\nicefrac{\sqrt{2}}{512}$ & 26 & 26.84091 & 97  & 57.2141
\\
$\nicefrac{\sqrt{2}}{1024}$ & 26 & 108.9613 & 100 &  245.456
\\
\hline
\end{tabular}
\caption{The average number of preconditioned MINRES iterations over all
 time steps together with the average solution time per time step as the mesh
 is refined ($\Omega = (0,1)^2, \tau = \nicefrac{0.002}{64}$, $T =0.04$
 $\varepsilon=0.0625$ (left), $\varepsilon=0.001$ (right)).}
	\label{ch-tab-fixed-tau-eps}
\end{table}
\par
	Table~\ref{ch-tab-time-fenics} shows the
  average solution time per time step for the same problem with $\varepsilon=0.0625$
 using  FEniCS  \cite{fenics15} on the prebuilt high-performance Docker.
 The main components of this code are Newton's method and LU decomposition.
 The time step size is fixed at $\tau= {0.002}/{64}$.  The residual tolerance
 is set at $10^{-7}$. The timing mechanism is a start and stop of the python
 command `time.time'. By comparing Table \ref{ch-tab-time-fenics} with
  Table \ref{ch-tab-fixed-tau-eps}, we see that FEniCS  appears to be
  faster for the coarser mesh sizes.
   However, as the mesh is refined, the advantage of our method is clearly observed.
\begin{table}[h!]
\centering
\begin{tabular}{ccccccc}
$h$ & Avg. Time to Solve (s)
	\\
	\hline
$\nicefrac{\sqrt{2}}{8}$  & 0.01449
	\\
$\nicefrac{\sqrt{2}}{16}$  & 0.02589
	\\
$\nicefrac{\sqrt{2}}{32}$  & 0.06675
	\\
$\nicefrac{\sqrt{2}}{64}$ & 0.21948
	\\
$\nicefrac{\sqrt{2}}{128}$  & 3.94694
\\
$\nicefrac{\sqrt{2}}{256}$  & 44.4569
\\
	\hline
	\end{tabular}
	\caption{The average solution time per time step using FEniCS to run the same test as performed in Table \ref{ch-tab-fixed-tau-eps} with $\varepsilon = 0.0625$}
	\label{ch-tab-time-fenics}
	\end{table}

\par	
 In the second experiment, we again take $\tau ={0.002}/{64}$ and a final time $T=0.04$.
 The median numbers of the preconditioned MINRES iterations over all time steps
 for several values of $\varepsilon$ as
 the mesh is refined are plotted in Figure~\ref{fig:h_and_eps_Med}.
 The performance of our method is independent of the mesh size $h$, and there is some dependence on
 the interfacial width parameter $\varepsilon$ as expected (cf. Remark~\ref{rem:varepsilon}).
\begin{figure}[H]
\hspace{-.3in}
\begin{minipage}{7in}
{
\includegraphics[width = 7in]{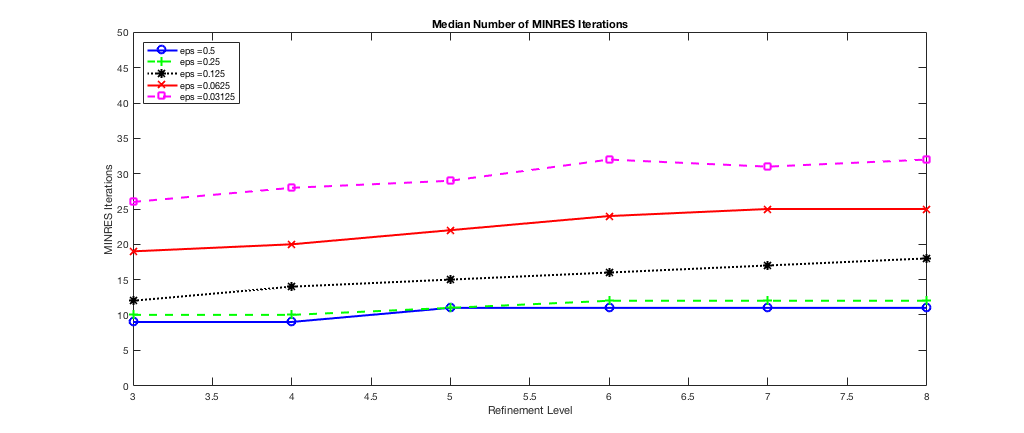}}
\end{minipage}
\caption{The median number of MINRES iterations over all time steps
for several values of
 $\varepsilon$ as the mesh is refined ($\Omega=(0,1)^2$,
 $\tau=0.002/64$ and $T=0.04$).}
\label{fig:h_and_eps_Med}
\end{figure}
\par
 In the third experiment, we fix $h=\sqrt2/64$, a final time
 $T=.04$, $\varepsilon=0.0625$ and $0.001$,  and refine the time step size $\tau$.
 The average number of the preconditioned MINRES iterations over all
 time steps is displayed in
 Table~\ref{ch-tab-fixed-h-tau} along with the average solution time per time step.
 The performance is clearly independent of the time step size $\tau$.
 The solution time roughly triples as $\varepsilon$ decreases from $0.0625$ to
 $0.001$, indicating again that the performance of the solver only depends mildly
 on $\varepsilon$.
\begin{table}[H]
	\centering
	\begin{tabular}{c|cc|cc}
& \multicolumn{2}{ c }{$\varepsilon = 0.0625$}  &  \multicolumn{2}{| c }{$\varepsilon = 0.001$}
 \\
$\tau$ & MINRES Its. & Time to Solve (s) & MINRES Its. & Time to Solve (s)
\\
\hline
$\nicefrac{.002}{8}$ & 27 & 0.310728 & 55 & 0.5297575
	\\
$\nicefrac{.002}{16}$ & 25 & 0.282849 & 57 & 0.5524107
	\\
$\nicefrac{.002}{32}$ & 25 & 0.273073 & 67&  0.6346132
	\\
$\nicefrac{.002}{64}$ & 24 & 0.265608 & 71 & 0.6825223
	\\
$\nicefrac{.002}{128}$ & 23 & 0.260789 & 71 & 0.6788814
	\\
$\nicefrac{.002}{256}$ & 22 & 0.251007 & 78 & 0.7282336
	\\
$\nicefrac{.002}{512}$ & 20 & 0.237420 & 70 & 0.6503484
\\
$\nicefrac{.002}{1024}$ & 18 & 0.219060 & 77 & 0.7032321
\\
	\hline
	\end{tabular}
	\caption{The average number of preconditioned MINRES iterations over
all time steps together with the average solution time per time step as the time step is refined
 ($\Omega = (0,1)^2, h = \nicefrac{\sqrt{2}}{64}$, $T = 0.04$, $\varepsilon = 0.0625$
  (left), $\varepsilon = 0.001$ (right)).}
	\label{ch-tab-fixed-h-tau}
	\end{table}

\par
 In the fourth experiment, we fix $\varepsilon=0.0625$ and a final time $T=0.04$.
  The median numbers of preconditioned MINRES iterations over all time steps
 for several values of $\tau$ as
 the mesh is refined are displayed in Figure~\ref{fig:h_and_tau_Med}.
 The performance of our method is clearly independent of the mesh size $h$ and the time step size $\tau$.
\begin{figure}[H]
\hspace{.5in}
\begin{minipage}{5.5in}
{
\includegraphics[width = 5.2in]{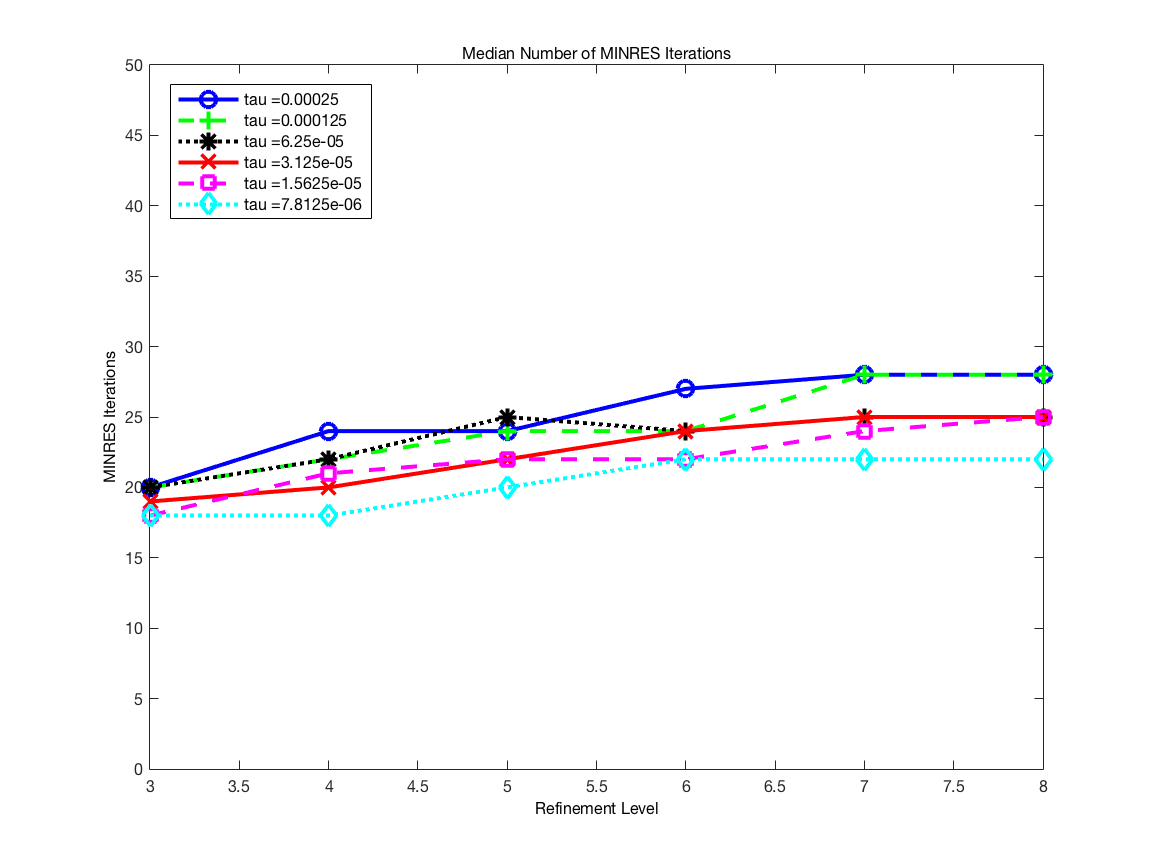}}
\end{minipage}
\caption{The median number of preconditioned MINRES iterations for several
  time step sizes as the mesh is refined
 ($\Omega=(0,1)^2$, $\varepsilon=0.0625$ and $T=0.04$). }
\label{fig:h_and_tau_Med}
\end{figure}	
\par
 In the fifth experiment, we fix the final time $T=0.04$ and let $\tau=0.002 h/\sqrt2$
 (cf. \eqref{eq:ErrorEstimate}).
   The median numbers of preconditioned MINRES iterations over all time steps
 for several values of $\varepsilon$ as
 the mesh is refined are displayed in
  Figure~\ref{fig:h_and_eps_and_tau_Med}.
  Again, the performance only depends on the interfacial width parameter $\varepsilon$.
\begin{figure}[H]
\hspace{1in}
\begin{minipage}{5in}
{\includegraphics[width = 4.3in]{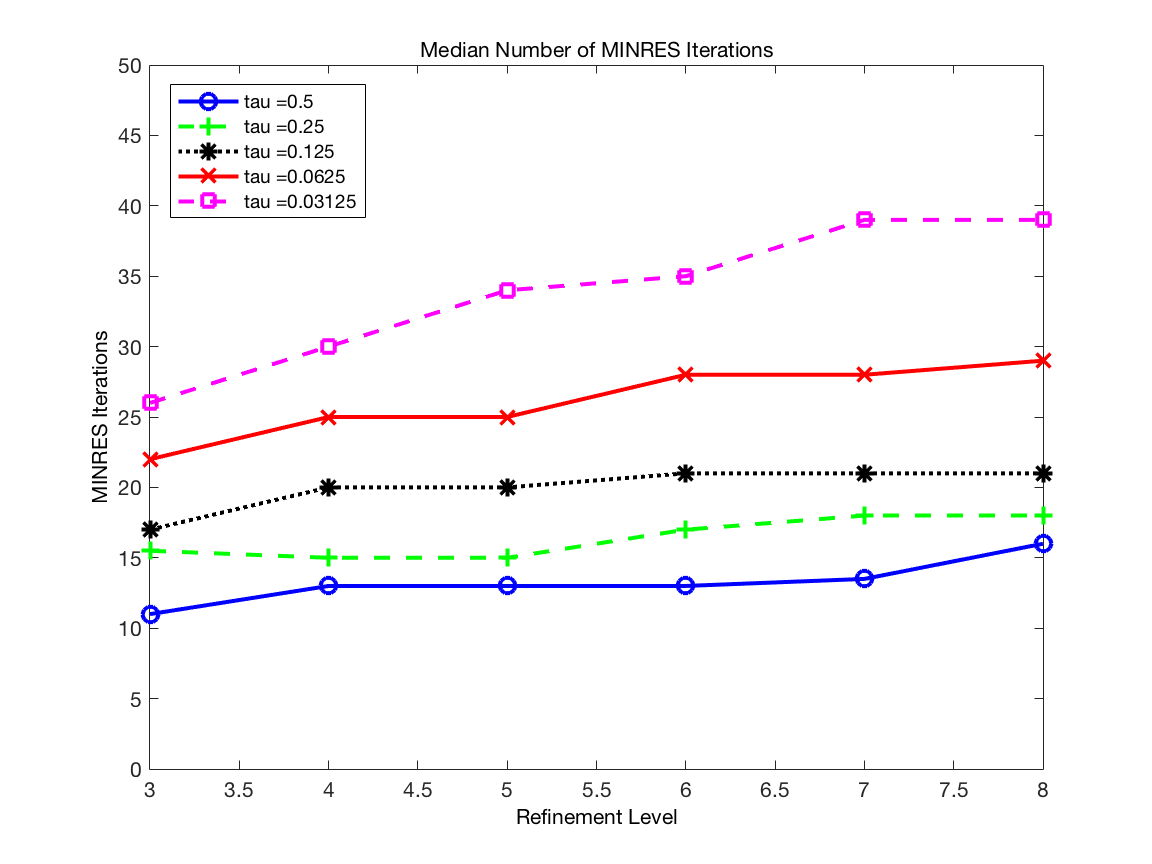}}
\end{minipage}
\caption{The median number of preconditioned MINRES iterations
 for several values of $\varepsilon$ as the mesh and time step are refined
 ($\Omega=(0,1)^2$, $\tau=0.002 h/\sqrt2$ and $T=0.04$).}
\label{fig:h_and_eps_and_tau_Med}
\end{figure}
\par
 In the sixth experiment, we solve the Cahn-Hillard equation with a random initial condition.
 We take $h=\sqrt2/128$, $\tau=0.002/128$ and $\varepsilon=0.0625$.  The surface plots for $\phi$
 at $t=0$, $t=0.0025$, $t=0.005$, $t=0.0075$, $t=0.01$ and $t=0.0125$ are displayed in
 Figure~\ref{fig:rand-matlab}.
 For comparison we solve the same problem using FEniCS and display the corresponding surface plots
 in Figure~\ref{fig:rand-fenics}.   The two figures are essentially indistinguishable.
\begin{figure}[H]
\subfloat{\includegraphics[width = 2in]{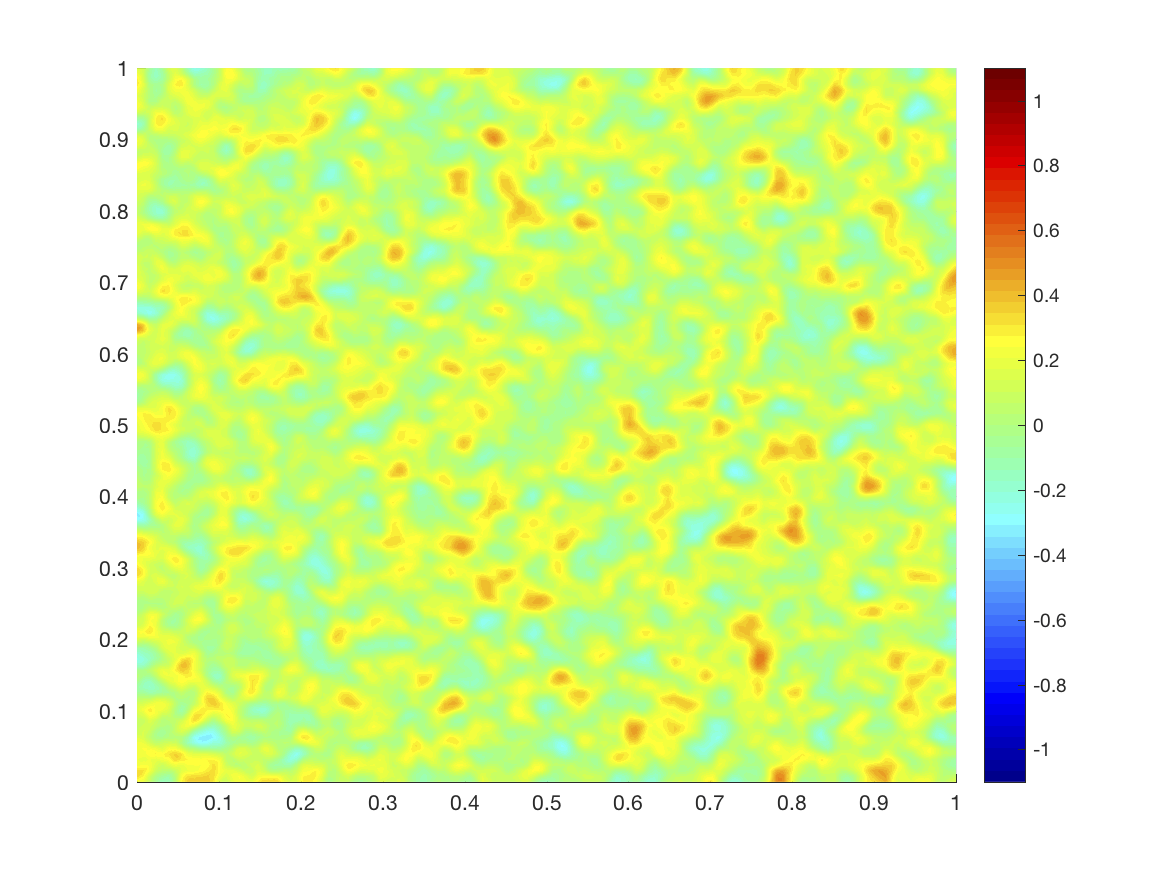}}
\subfloat{\includegraphics[width = 2in]{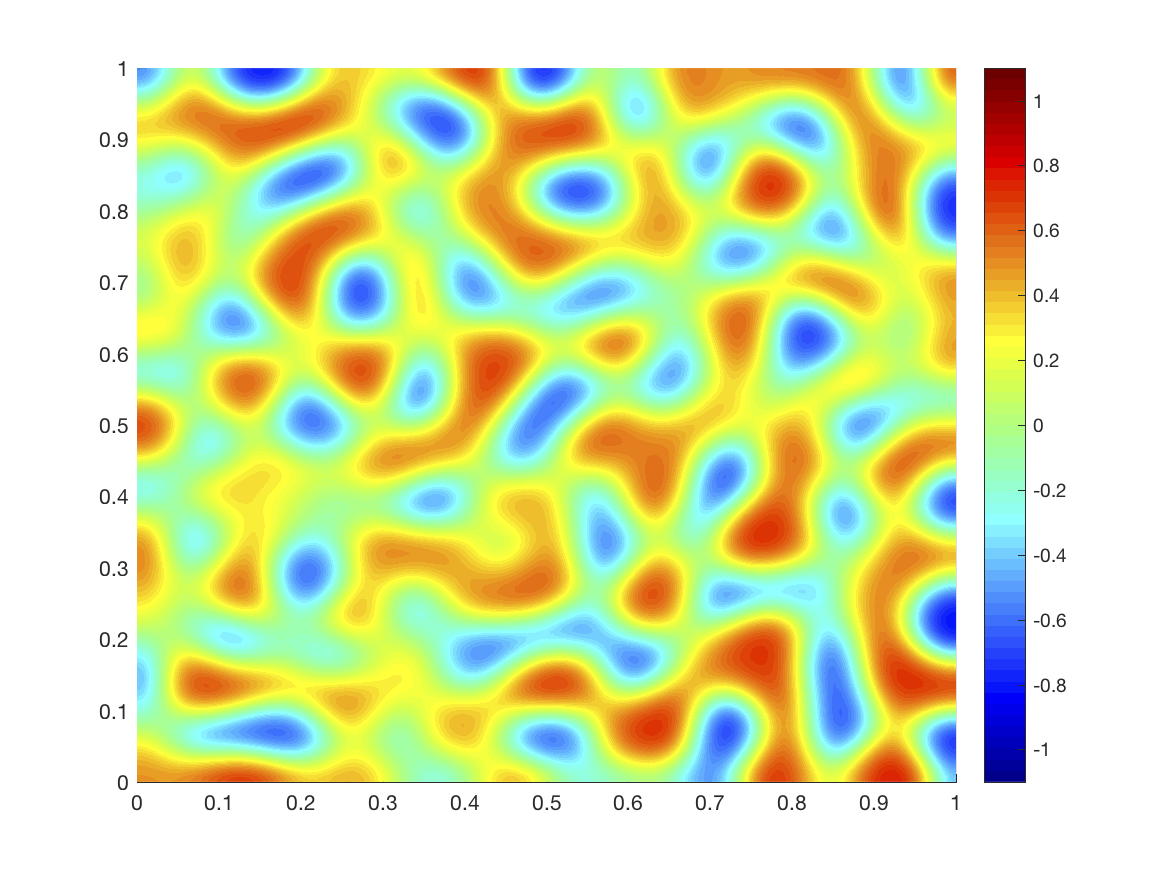}}
\subfloat{\includegraphics[width = 2in]{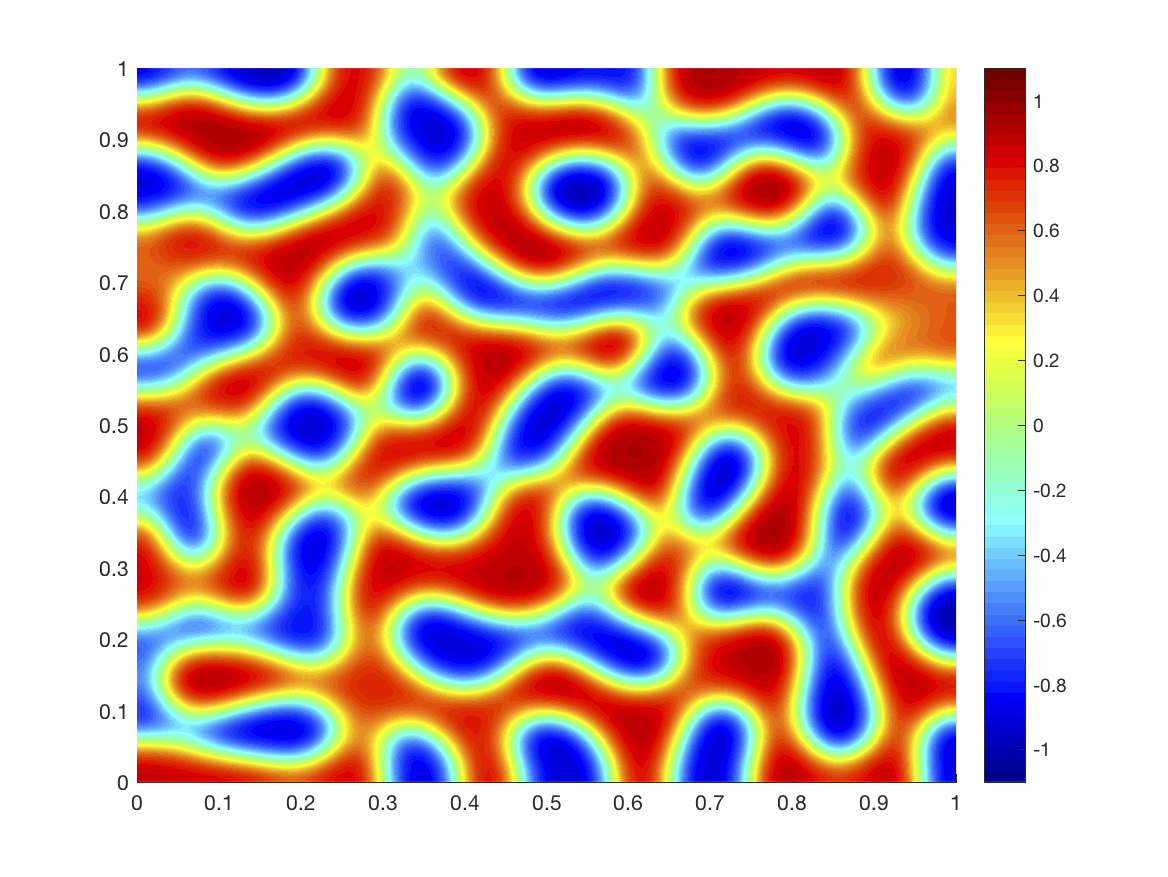}} \\
\subfloat{\includegraphics[width = 2in]{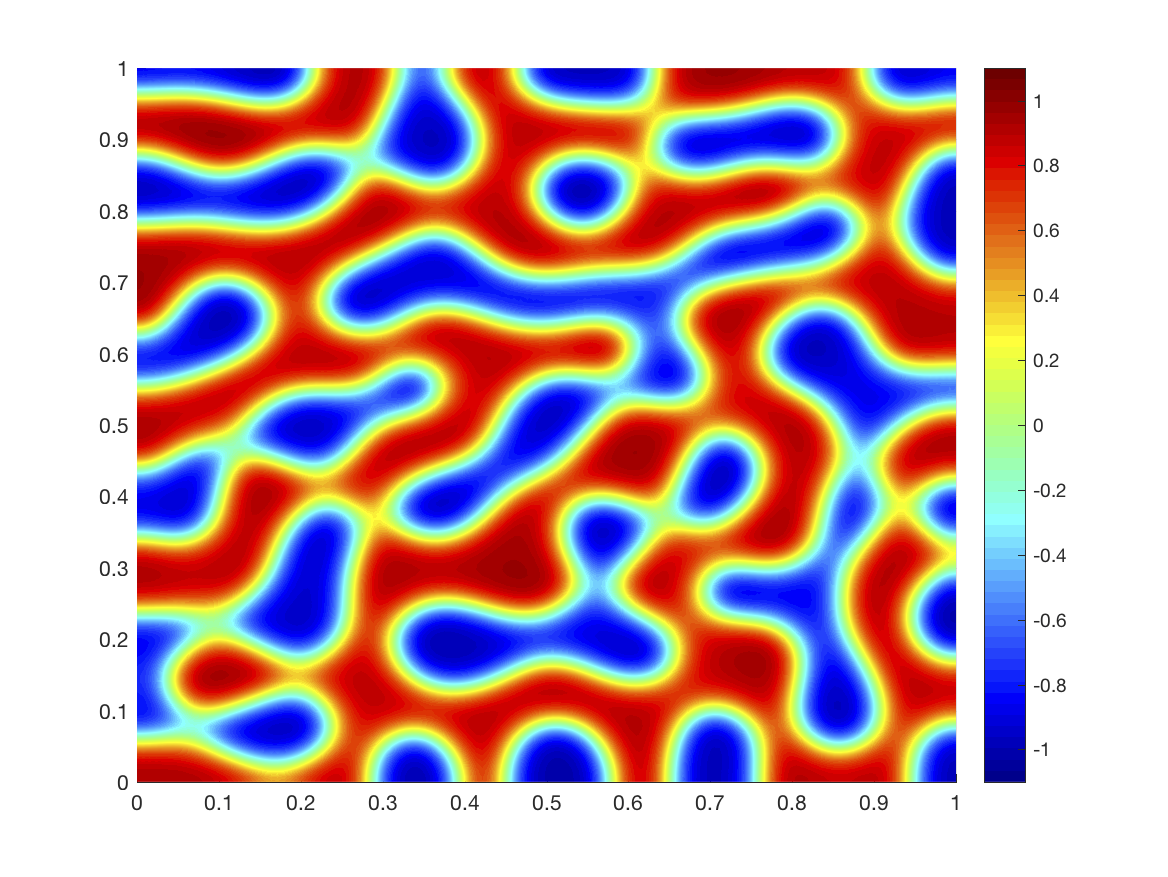}}
\subfloat{\includegraphics[width = 2in]{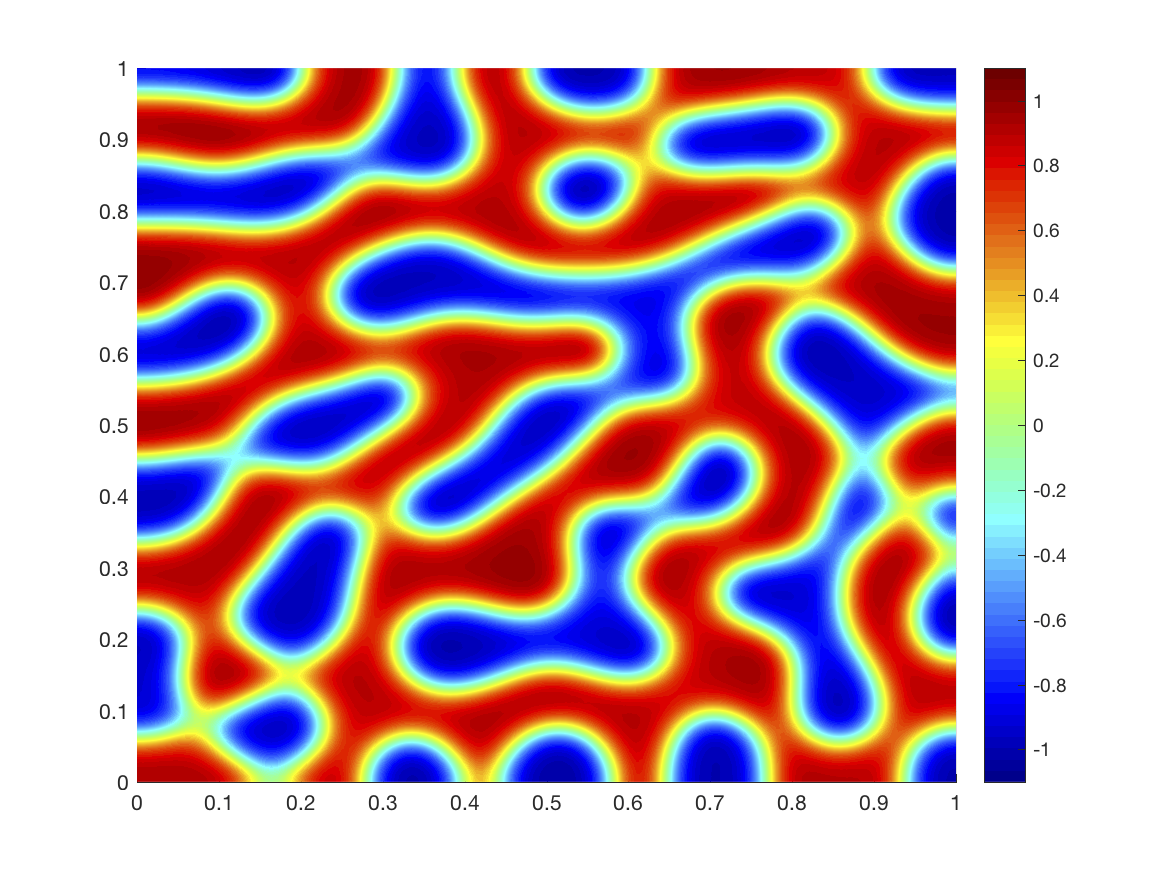}}
\subfloat{\includegraphics[width = 2in]{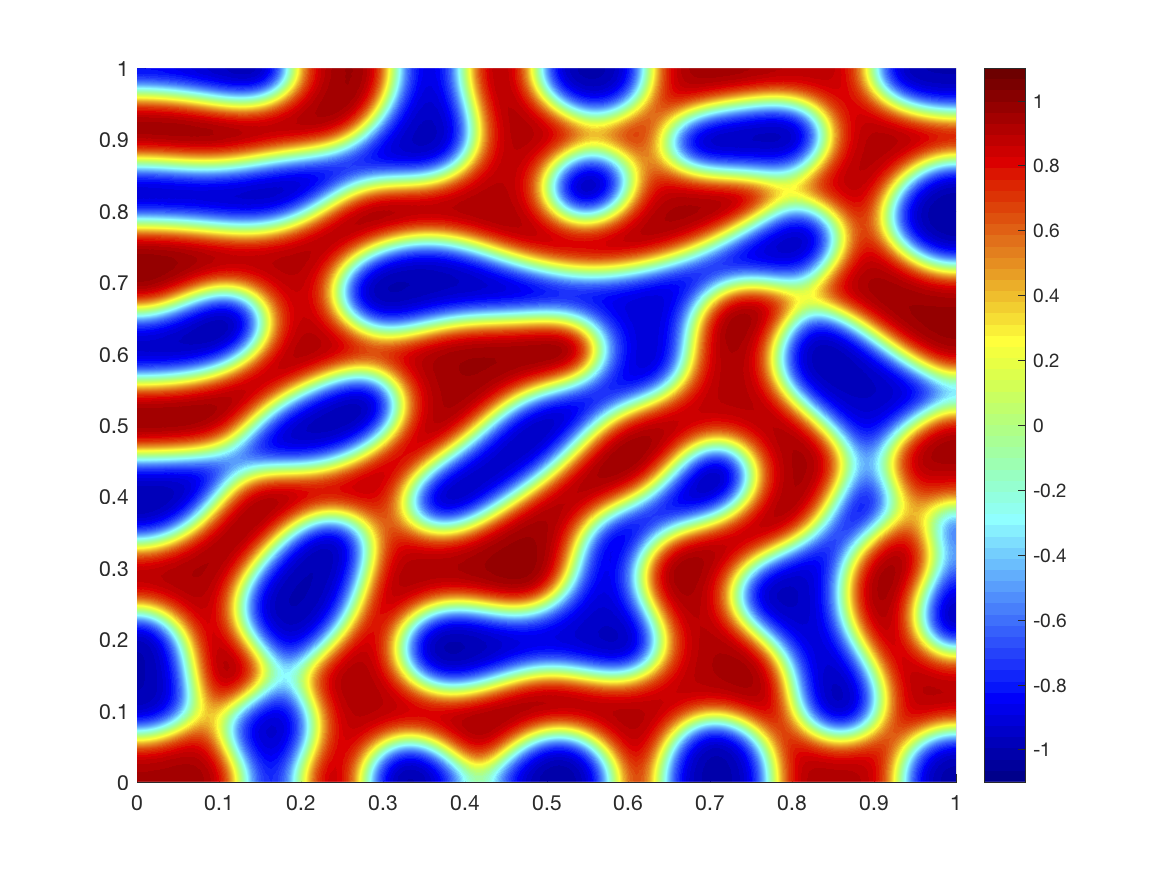}}\\
\caption{Spinodal decomposition of a binary fluid on $(0,1)^2$ with random initial data.
 The times displayed are $t=0, t=0.0025, t=0.005$ (top from left to right) and $t=0.0075, t=0.01, t=0.0125$ (bottom from left to right).}
\label{fig:rand-matlab}
\end{figure}
\begin{figure}[H]
\subfloat{\includegraphics[width = 2in]{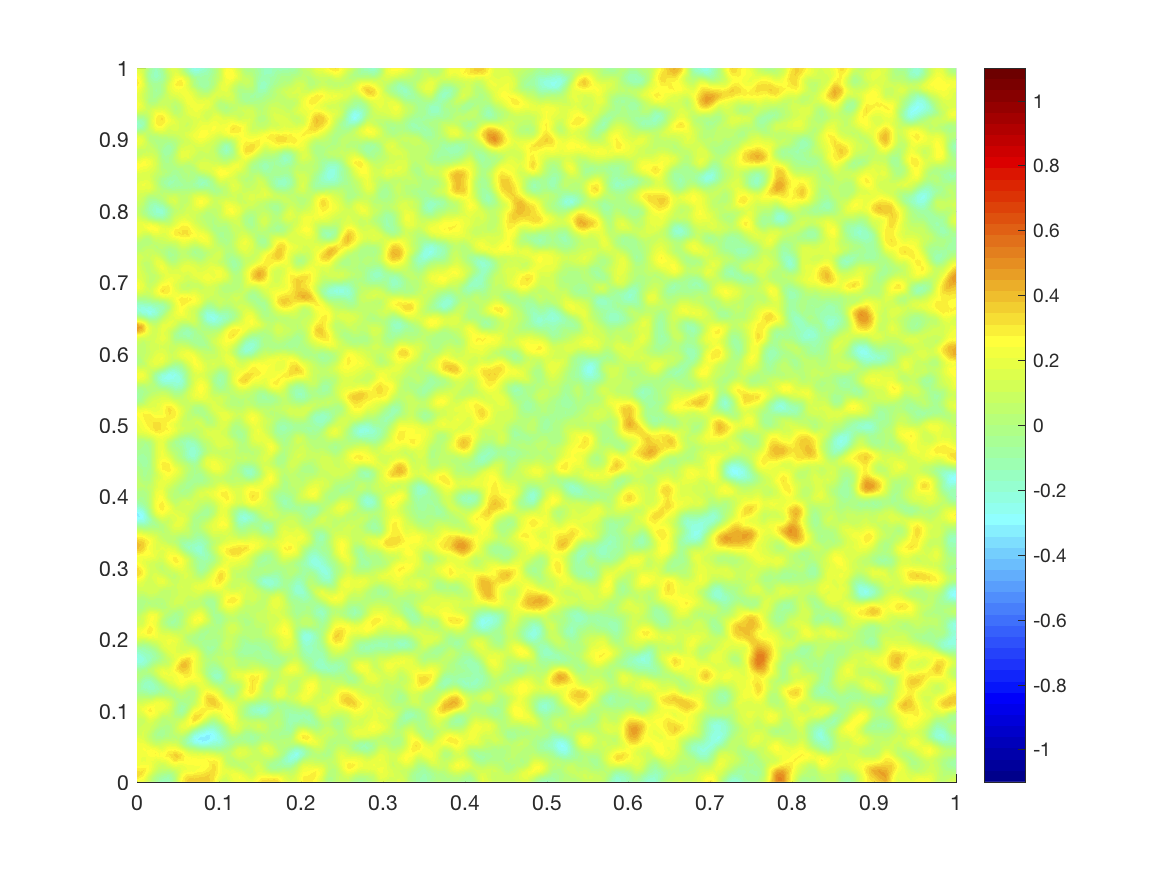}}
\subfloat{\includegraphics[width = 2in]{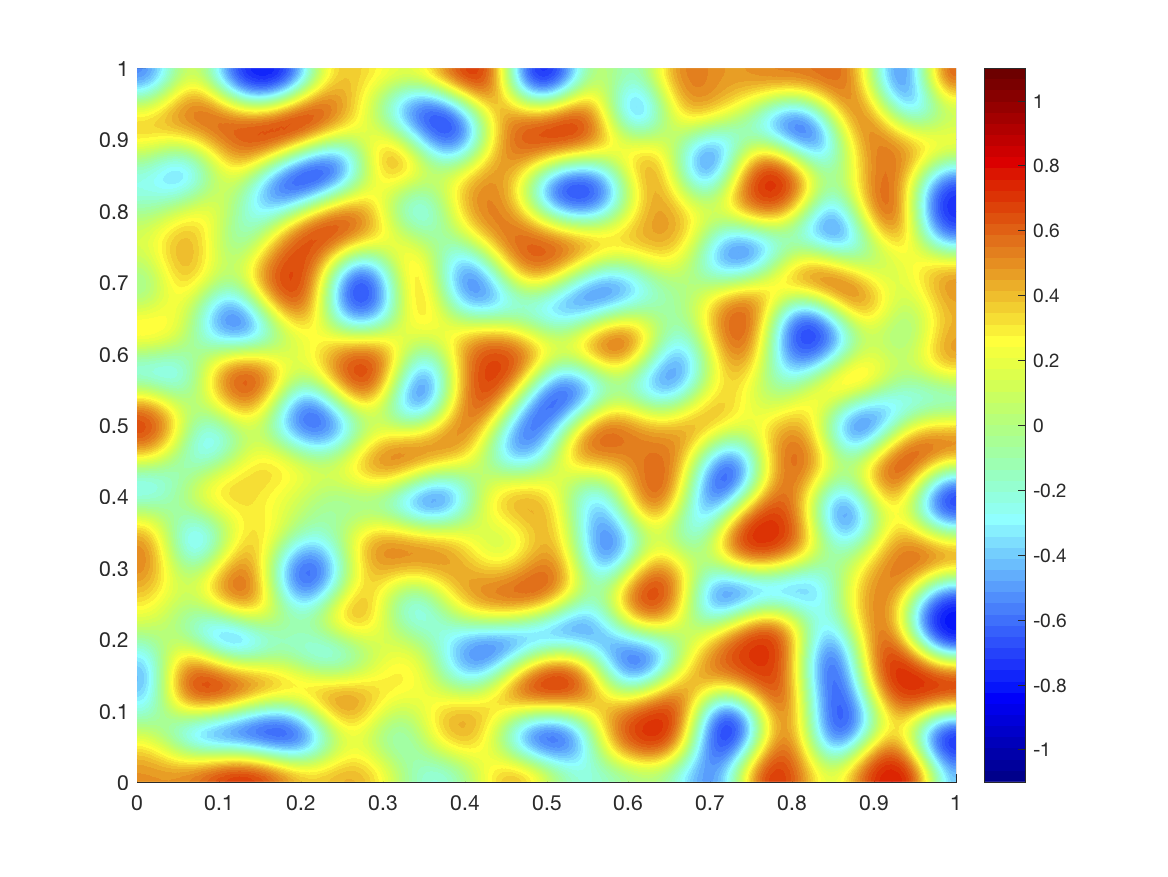}}
\subfloat{\includegraphics[width = 2in]{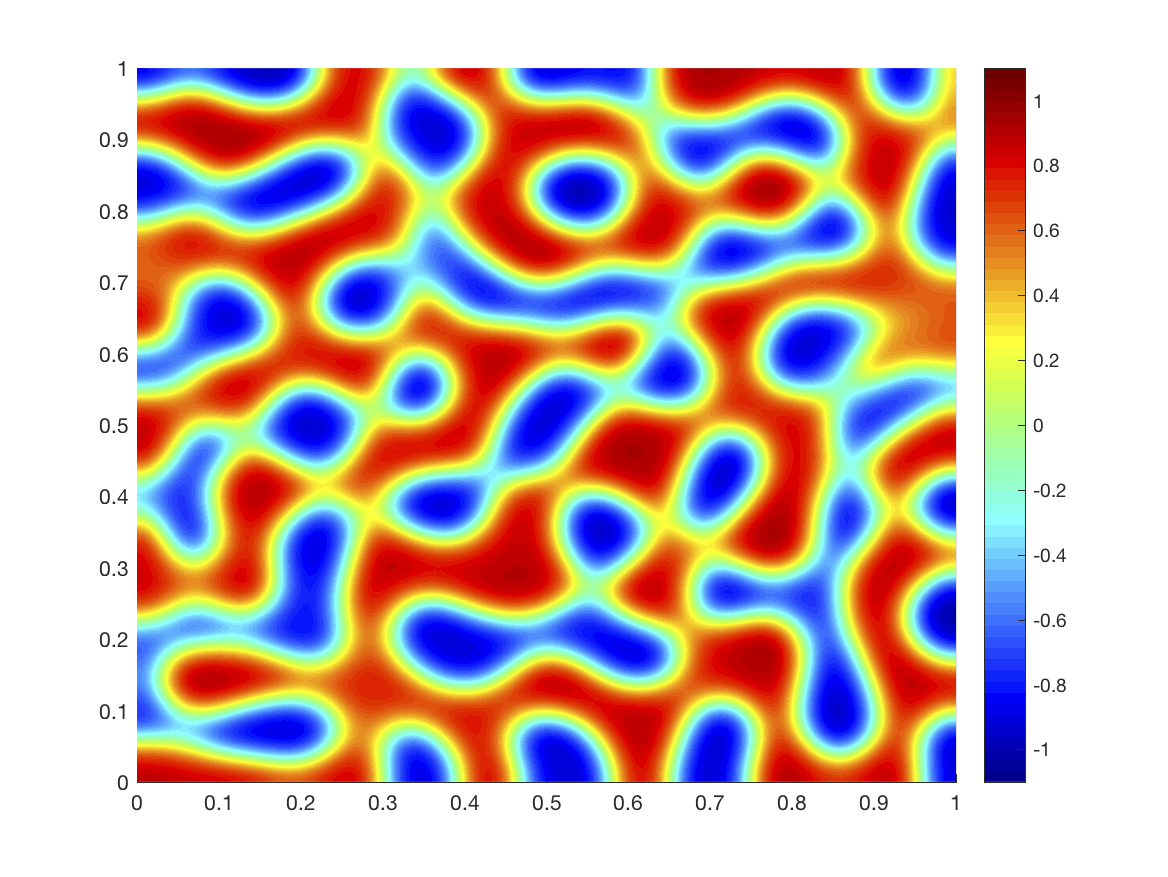}} \\
\subfloat{\includegraphics[width = 2in]{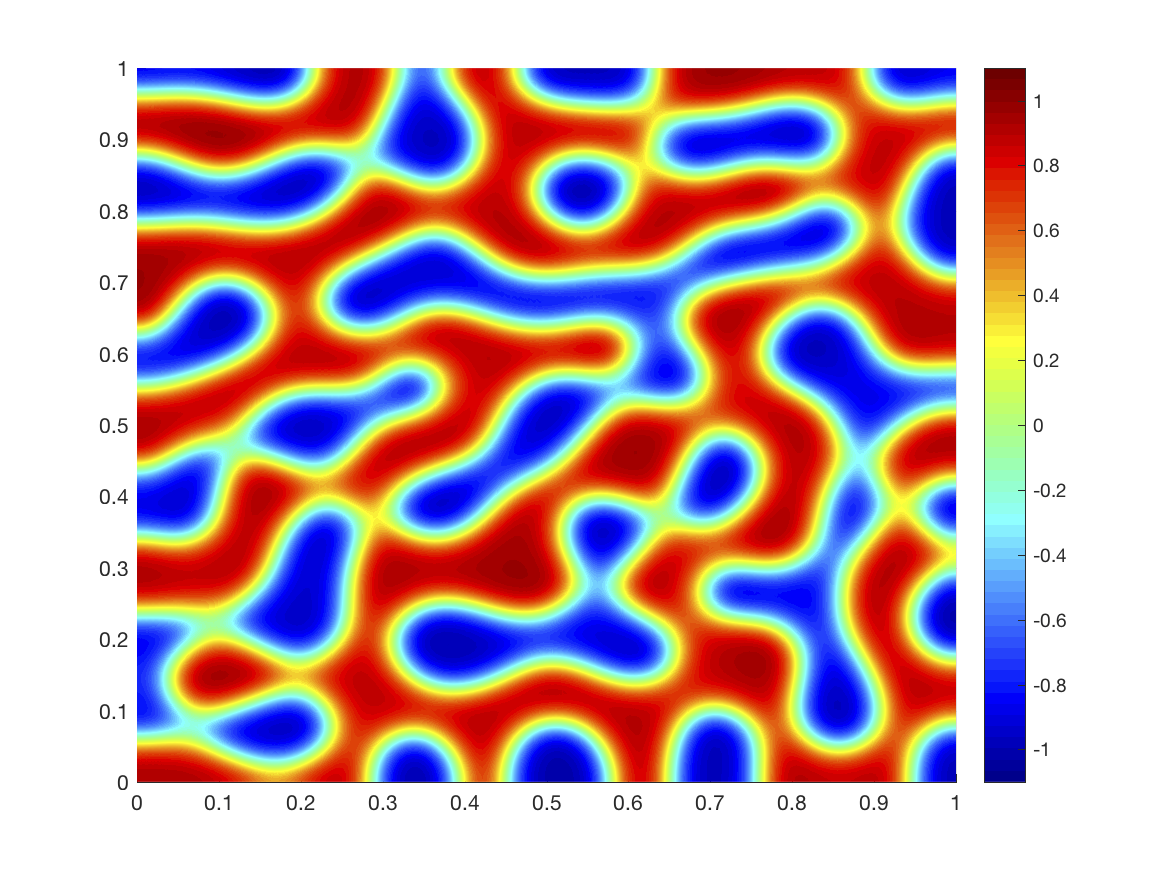}}
\subfloat{\includegraphics[width = 2in]{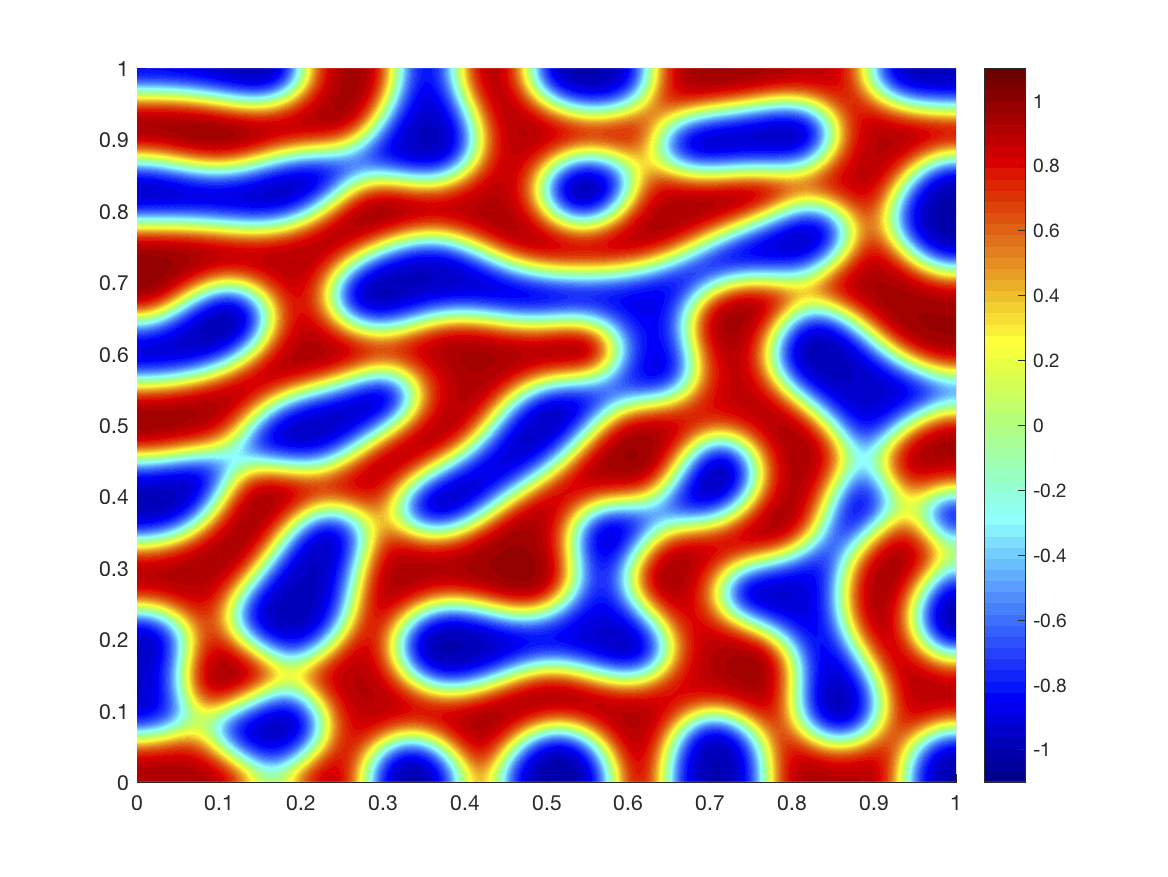}}
\subfloat{\includegraphics[width = 2in]{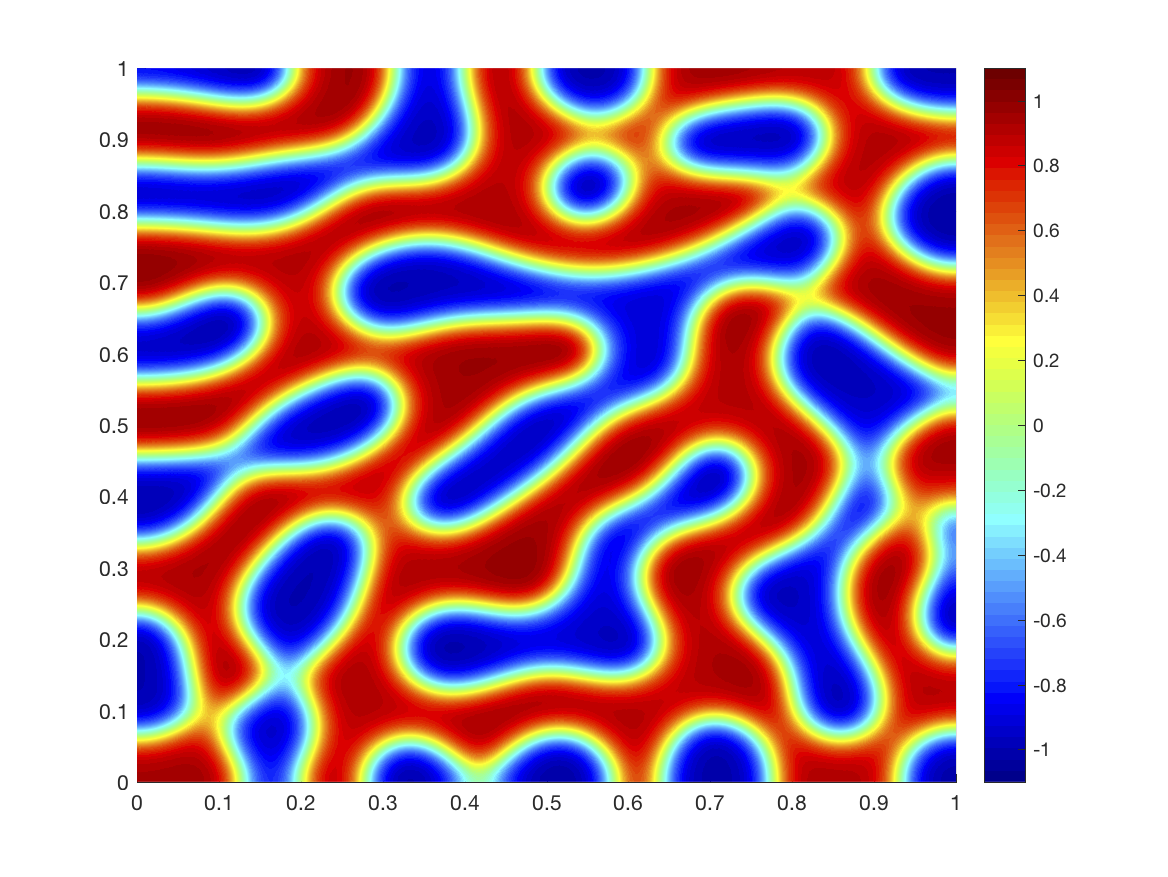}}\\
\caption{Spinodal decomposition of a binary fluid on $(0,1)^2$ with random initial data obtained by FEniCS.
 The times displayed are $t=0, t=0.0025, t=0.005$ (top from left to right) and $t=0.0075, t=0.01, t=0.0125$ (bottom from left to right).}
\label{fig:rand-fenics}
\end{figure}
\par
 In the seventh experiment, we solve the Cahn-Hilliard equation with a random
 initial condition on the unit cube $\Omega = (0,1)^3$ using uniform meshes.
 The initial mesh $\mathcal{T}_0$
 consists of six tetrahedrons. The meshes $\mathcal{T}_1, \mathcal{T}_2, \cdots$
 are obtained from $\mathcal{T}_0$ by uniform refinements.
 We take $\varepsilon=0.0625$, $\tau=0.002/64$, a final time $T=0.03$ and
 refine the mesh.
\par
 Table~\ref{ch-tab-3D} displays the maximum, median, and average number of
  preconditioned
 MINRES iterations
 over all time steps along with
 the average solution time per time step.
 Again, only one Newton iteration is needed for each time step.
 We observe that the
 performance of the preconditioned MINRES algorithm does not depend on $h$ and the
 solution time per time step grows linearly with the number of degrees of freedom.
\begin{table}[H]
	\centering
	\begin{tabular}{cccccc}
& \multicolumn{3}{ c }{MINRES Iterations}  & {Time to Solve (s)}
 \\
$h$ & Max. & Med. & Avg. & Avg.
\\
\hline
$\nicefrac{\sqrt{3}}{8}$ & 33 & 28 & 27  & .0460738
	\\
$\nicefrac{\sqrt{3}}{16}$ & 33 & 29 & 29  & .2451147
	\\
$\nicefrac{\sqrt{3}}{32}$ & 36 & 30 & 30  & 1.977465
	\\
$\nicefrac{\sqrt{3}}{64}$ & 37 & 29 & 30  & 15.97806
	\\
$\nicefrac{\sqrt{3}}{128}$ & 41 & 30 & 31  & 169.7146
	\\
	\hline
	\end{tabular}
	\caption{The maximum, median and average number of preconditioned MINRES iterations over
    all time steps together with the 
    average solution time per time step as the mesh is
    refined ($\Omega=(0,1)^3$, $\varepsilon=0.0625$, $\tau=0.002/64$ and $T=0.03$).}
	\label{ch-tab-3D}
	\end{table}
\par	
	Isocap plots for $\phi$ at $t=0, t=0.0015625, t=0.003125, t=0.0046875$,
  $t=0.00625$ and $t=0.0078125$ are displayed in Figure \ref{fig:rand-matlab-3D}.
	
\begin{figure}[H]
\subfloat{\includegraphics[width = 2in]{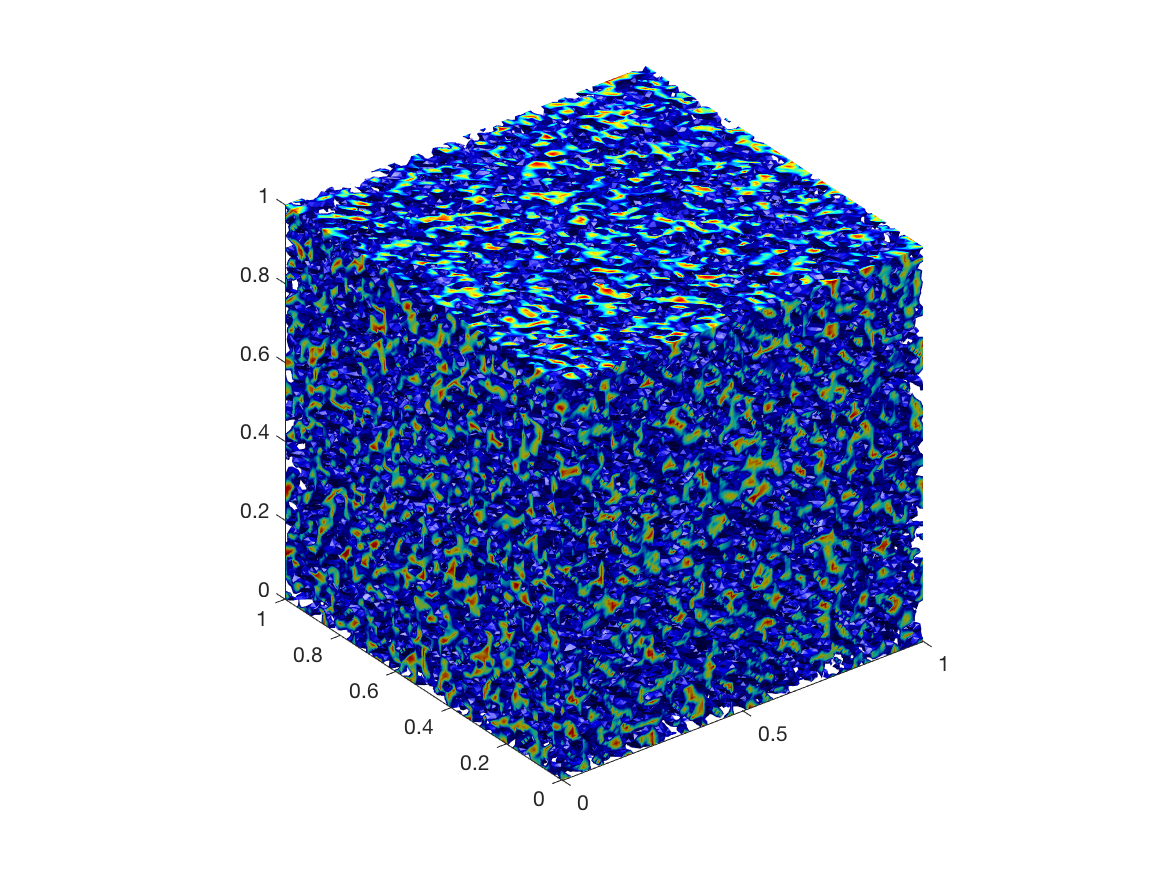}}
\subfloat{\includegraphics[width = 2in]{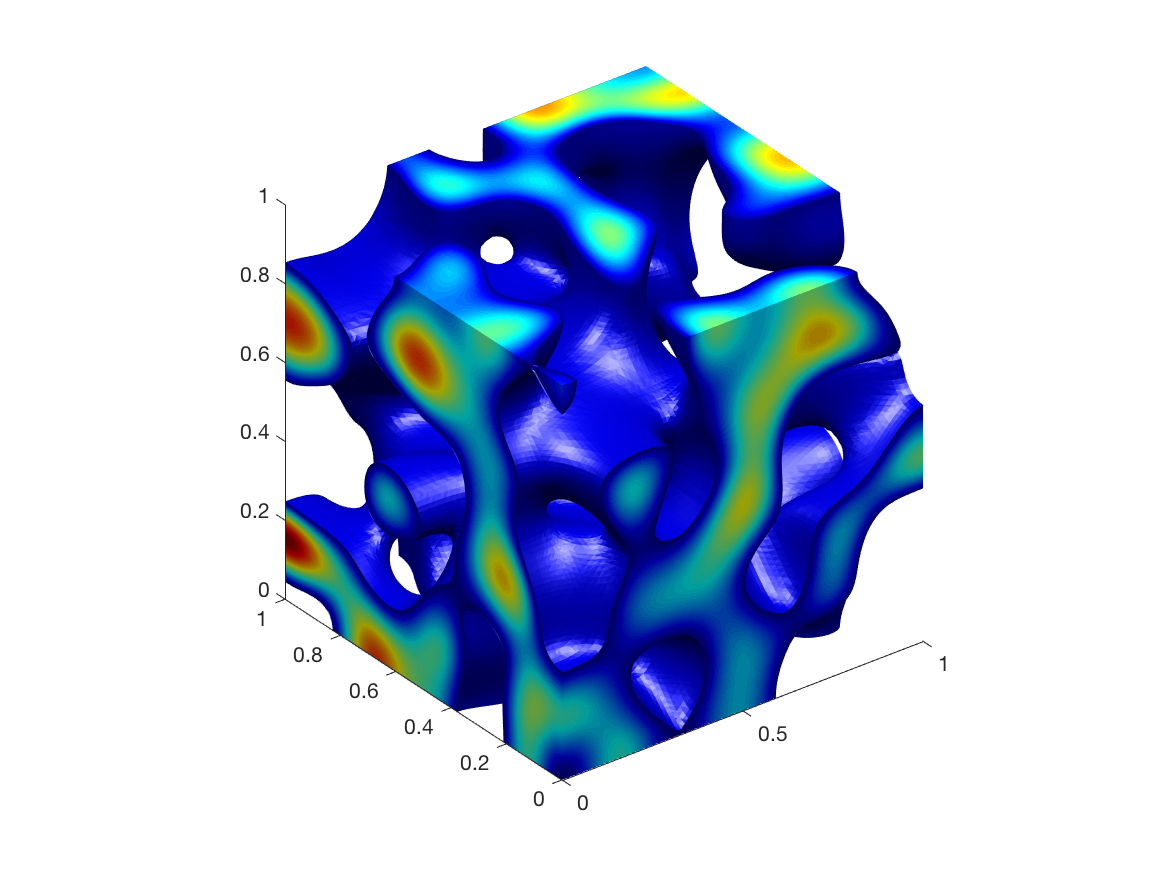}}
\subfloat{\includegraphics[width = 2in]{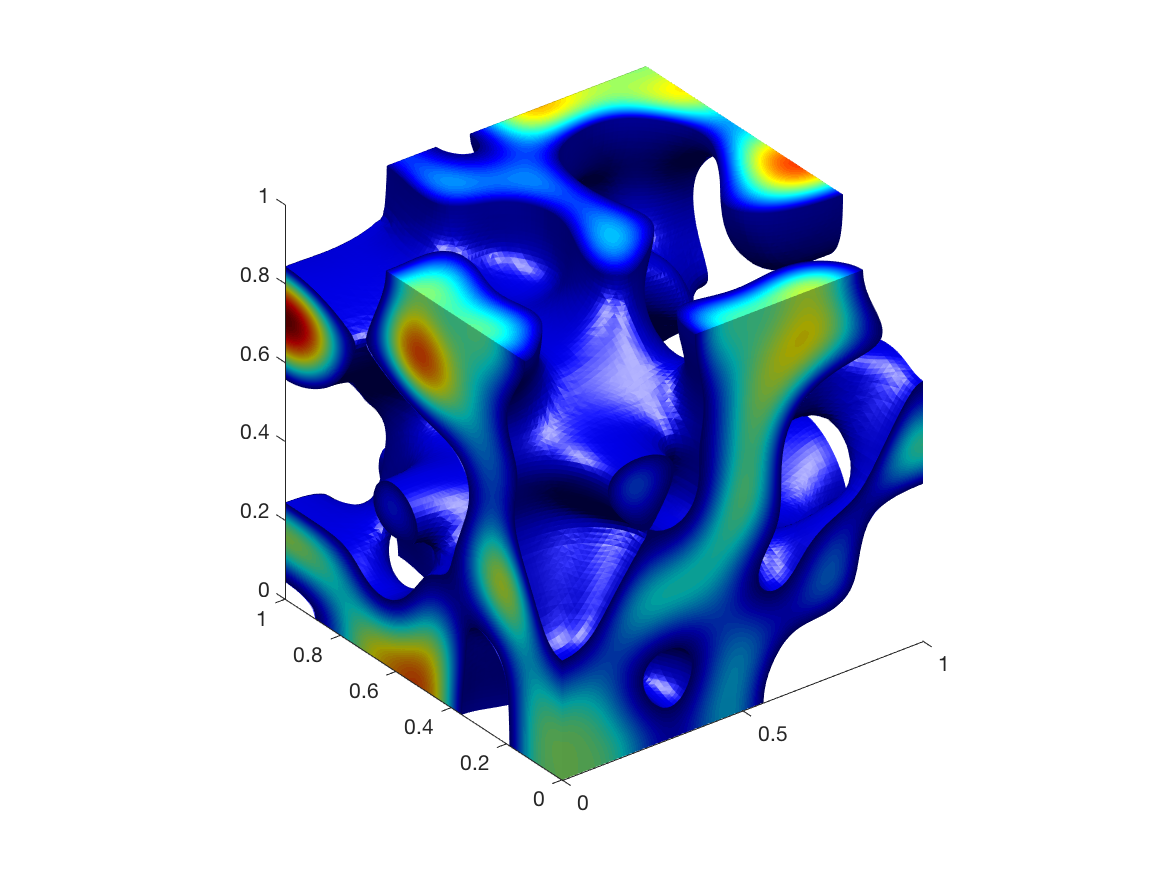}} \\
\subfloat{\includegraphics[width = 2in]{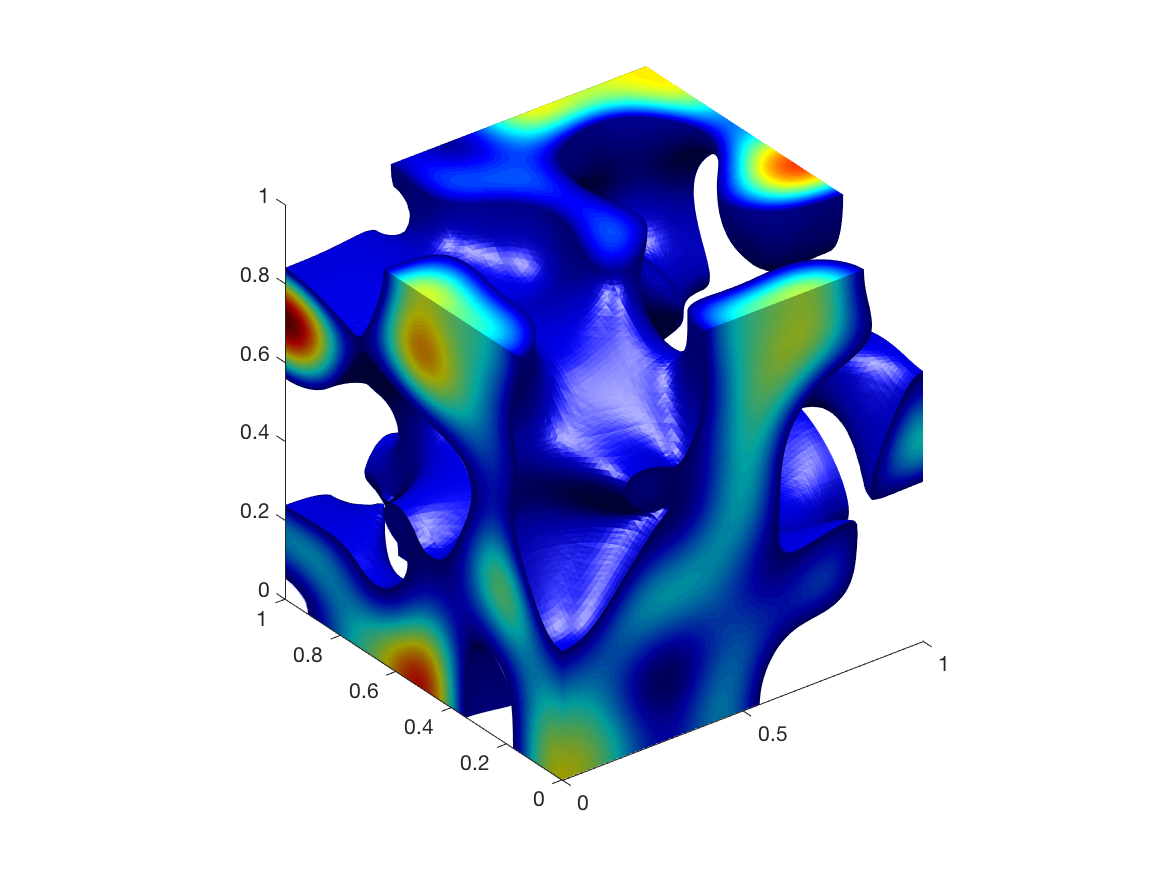}}
\subfloat{\includegraphics[width = 2in]{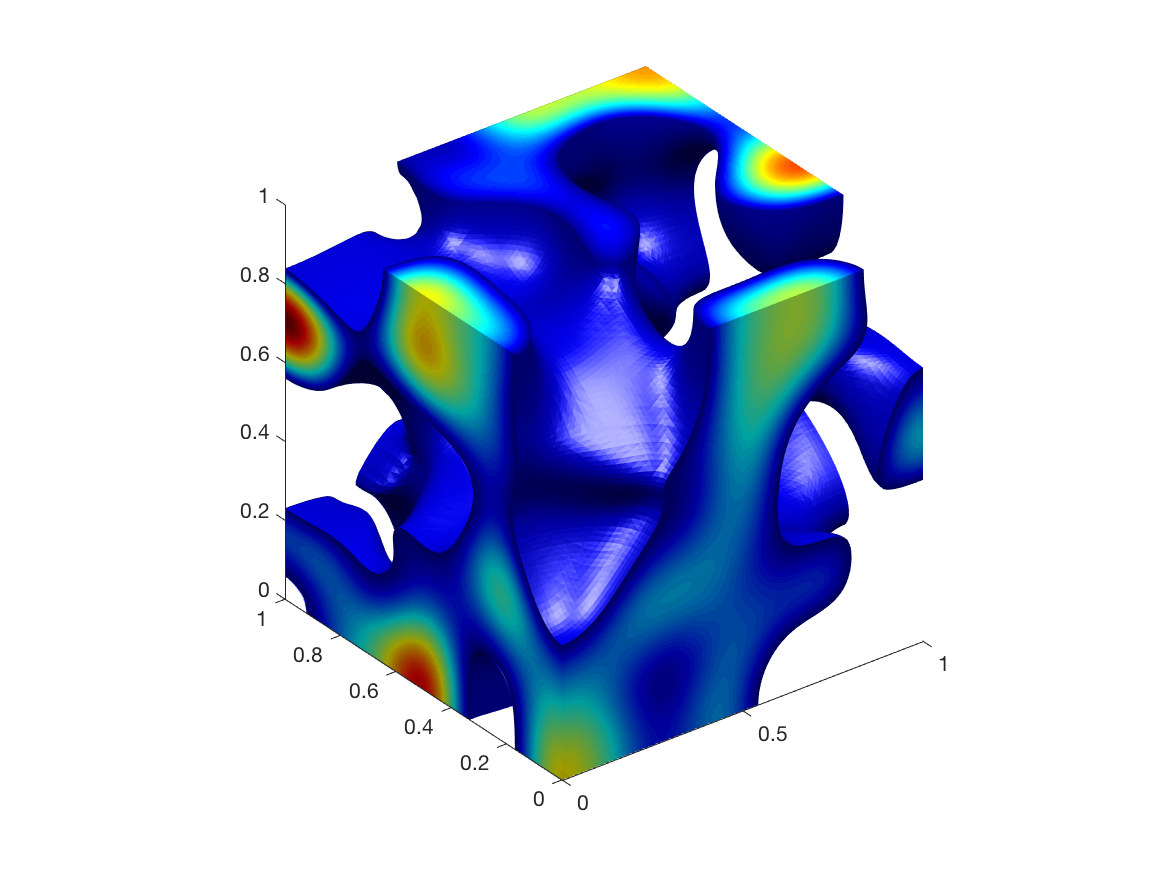}}
\subfloat{\includegraphics[width = 2in]{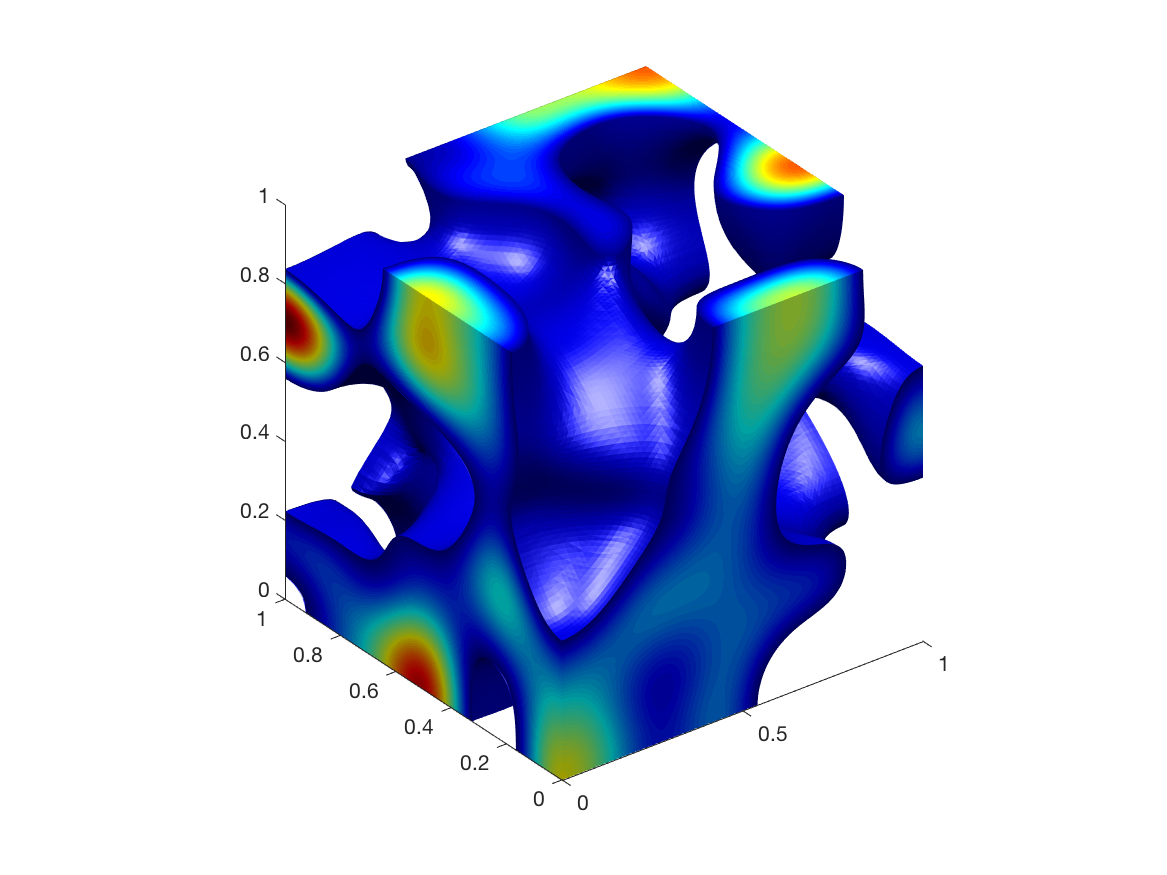}}\\
\caption{Spinodal decomposition of a binary fluid on $(0,1)^3$.
 The times displayed are $t=0, t=0.0015625, t=0.003125$ (top from left to right)
 and $t=0.0046875, t=0.00625, t=0.0078125$ (bottom from left to right).}
\label{fig:rand-matlab-3D}
\end{figure}
\par
 In the eighth experiment, we solve the Cahn-Hilliard equation with a random initial condition.
 We take $h=\nicefrac{\sqrt{3}}{32}, \tau = \nicefrac{0.002}{64}$ and $\varepsilon = 0.0625$.
 Isocap plots for $t=0, t=0.015$ and $t=0.03$ are displayed in Figure \ref{fig:3D-matlab}.
 For comparison, we solve the same problem using FEniCS and display the corresponding isocap plots in
 Figure~\ref{fig:3D-fenics}. The two figures are, again, essentially indistinguishable.
 Furthermore, we achieve considerable savings in time by using our solver.
 Specifically, the test using our solver completed in under 30 minutes whereas the test using
 FEniCS required 24 hours to reach the same final stopping time of $T=0.03$.

\begin{figure}[H]
\subfloat{\includegraphics[width = 2in]{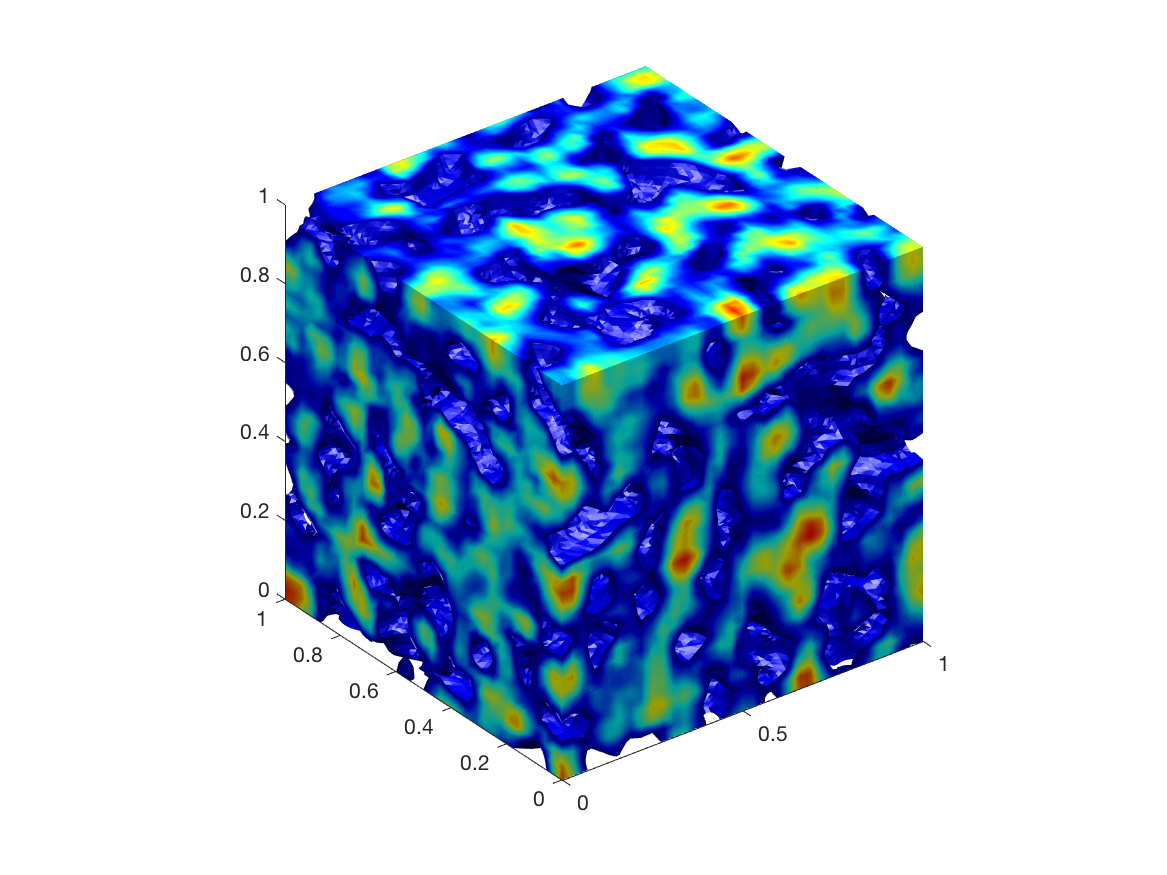}}
\subfloat{\includegraphics[width = 2in]{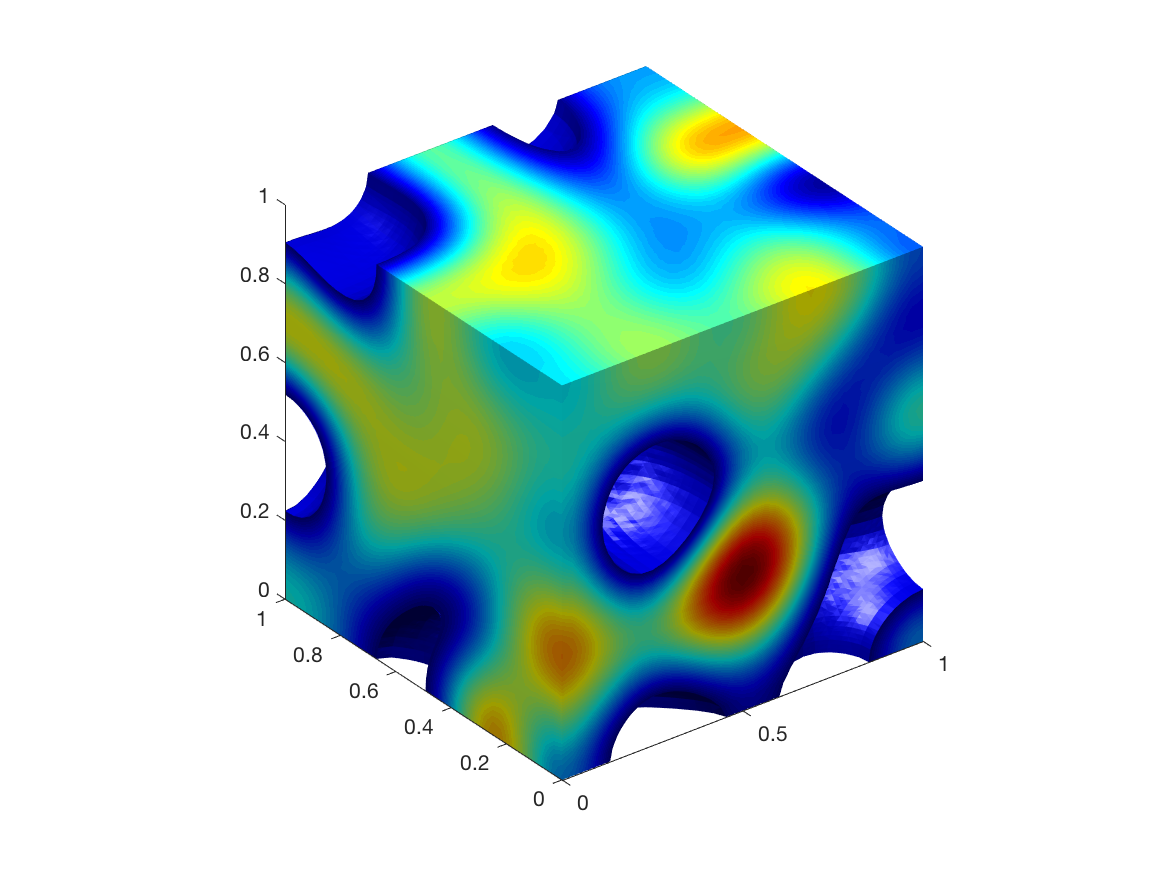}}
\subfloat{\includegraphics[width = 2in]{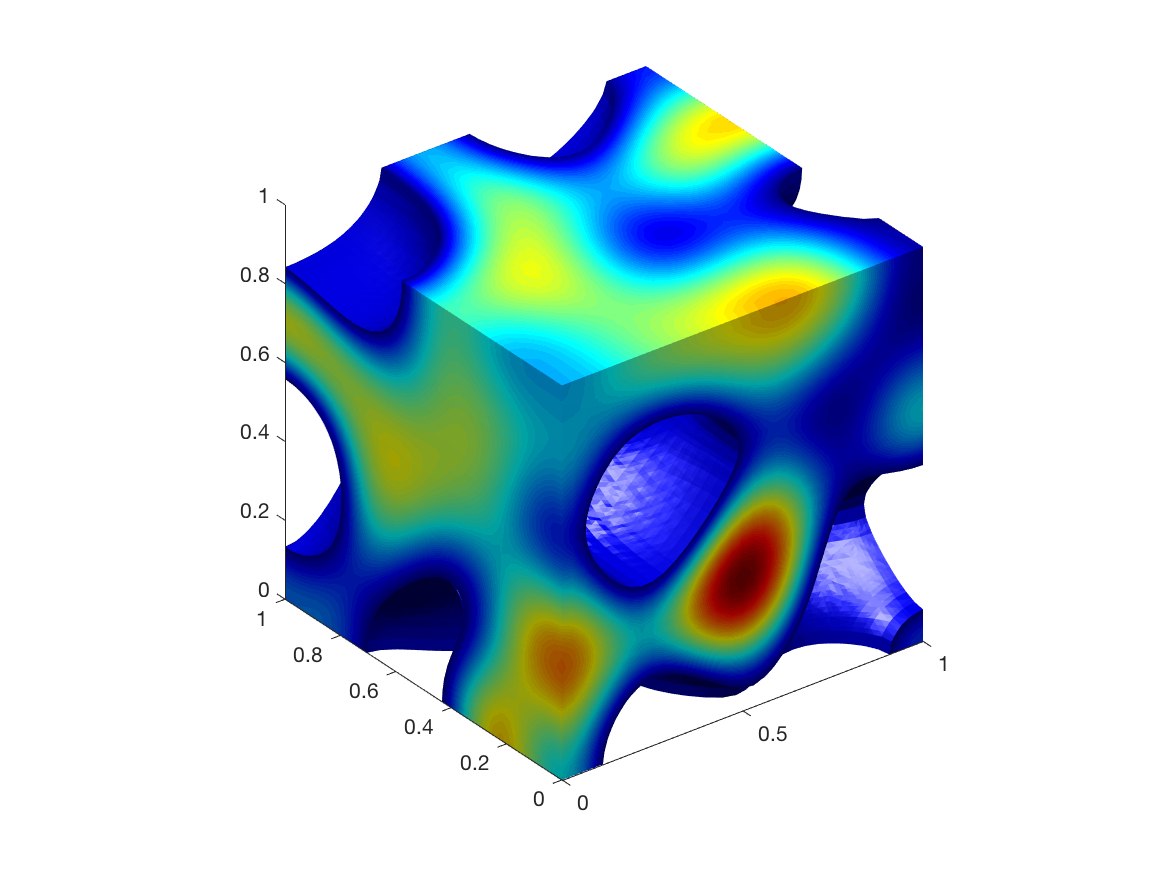}}
\caption{Spinodal decomposition of a binary fluid on $(0,1)^3$ with random initial data. The times displayed are $t=0, t=0.015, t=0.03$ (from left to right).}
\label{fig:3D-matlab}
\end{figure}

\begin{figure}[H]
\subfloat{\includegraphics[width = 2in]{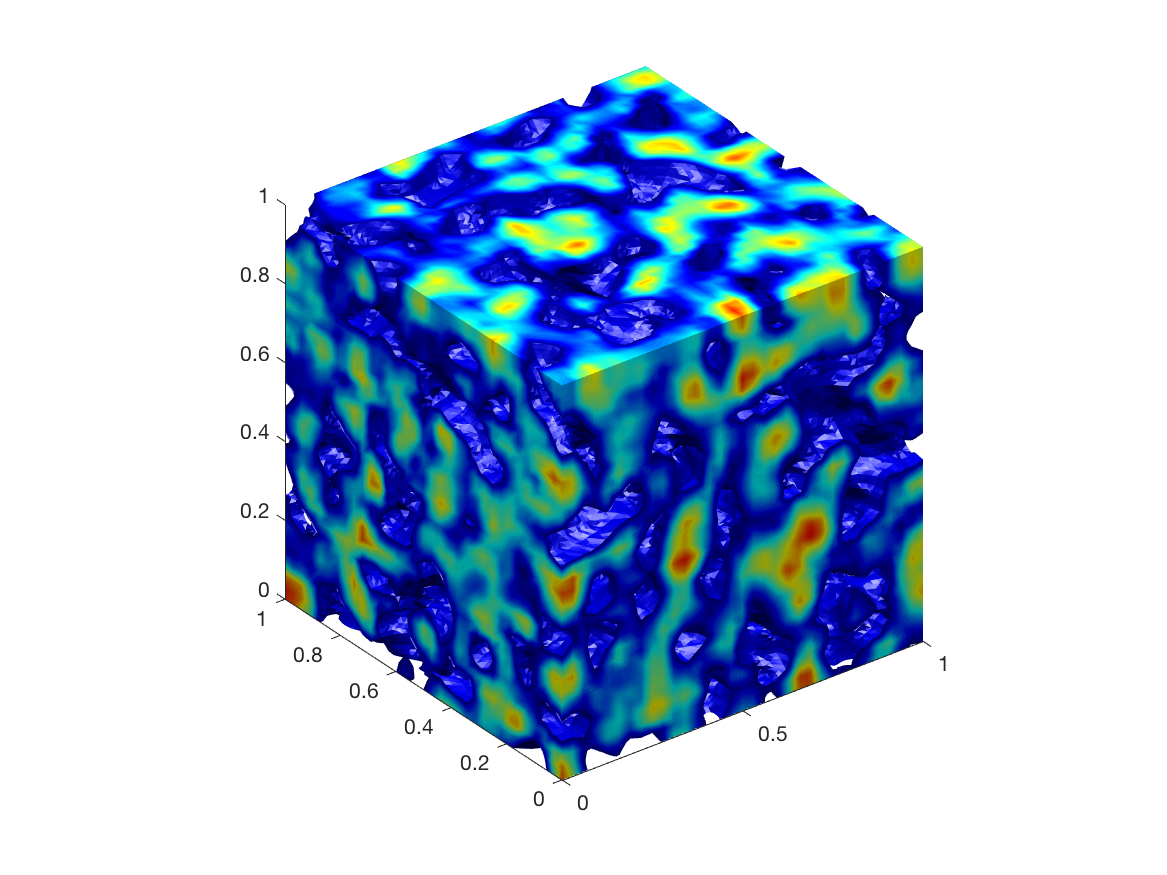}}
\subfloat{\includegraphics[width = 2in]{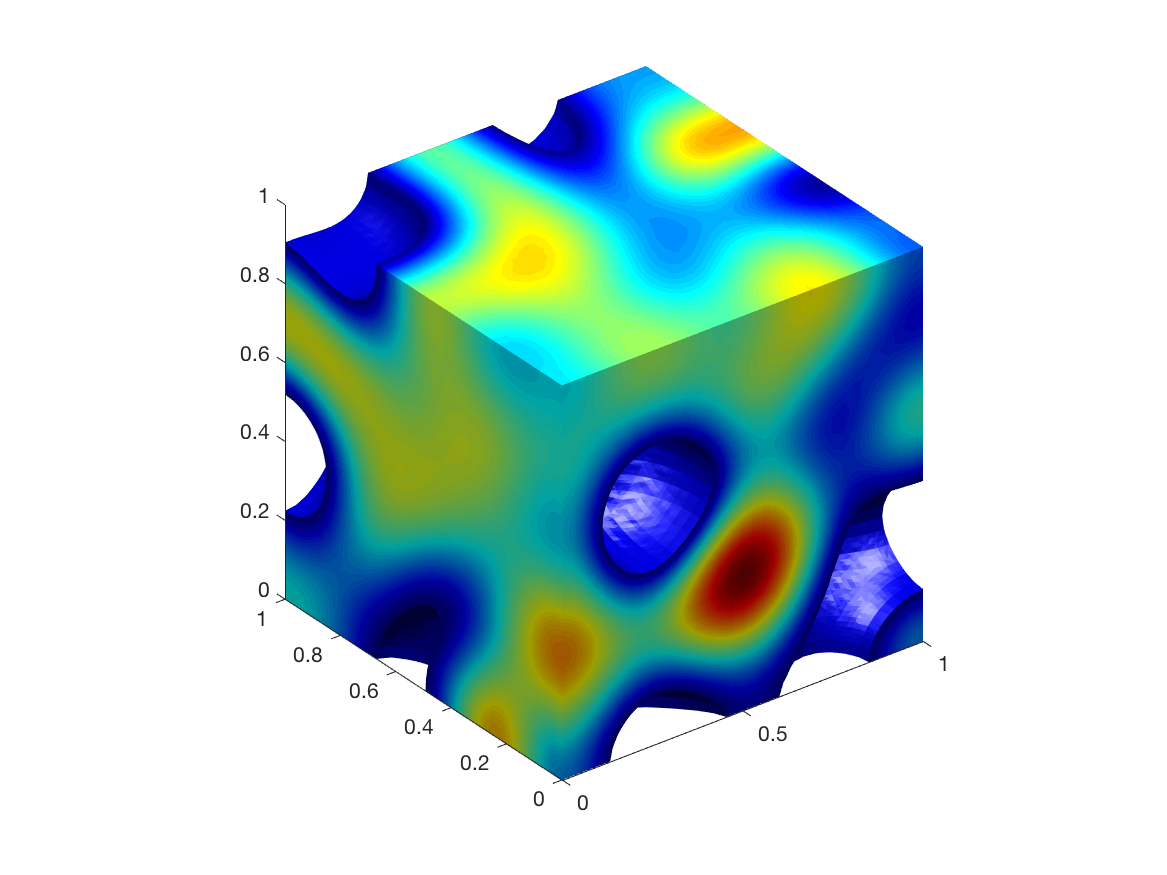}}
\subfloat{\includegraphics[width = 2in]{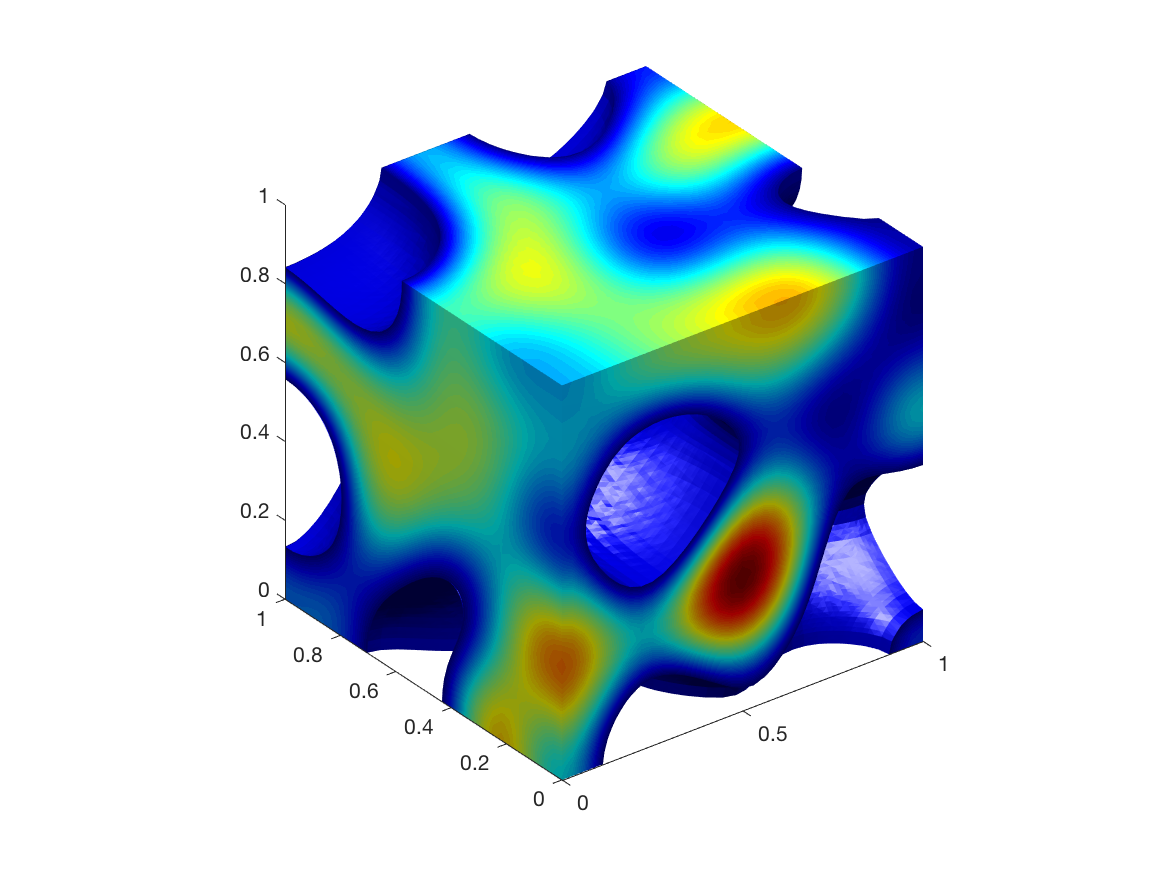}}
\caption{Spinodal decomposition of a binary fluid on $(0,1)^3$ with random initial data using
 FEniCS. The times displayed are $t=0, t=0.015, t=0.03$ (from left to right).}
\label{fig:3D-fenics}
\end{figure}

\section{Concluding Remarks}\label{sec:Conclusions}
 We have developed a robust solver for a mixed finite element convex splitting scheme for the
 Cahn-Hilliard equation,  where in each time step the Jacobian system for the Newton iteration
 is solved by a preconditioned MINRES algorithm with a block diagonal multigrid preconditioner.
 The robustness of our solver is confirmed by numerical tests in two and three dimensions.
  We have also validated our numerical results through comparisons with the results obtained through
  FEniCS and observed significant speed-up.
\par
 The methodology developed in this paper can be adapted for coupled systems that involve
 the Cahn-Hilliard equation, such as the Cahn-Hilliard Navier Stokes system
 (cf. \cite{tierra15} and the references therein).
 This is the topic for an ongoing research project.
\section*{Acknowledgement}
Portions of this research were conducted with high performance
computational resources provided by Louisiana State University
(http://www.hpc.lsu.edu).
 We would also like to thank Shawn Walker for his valuable advice regarding the
 FELICITY/C++ Toolbox for MATLAB.


\begin{thebibliography}{10}

\bibitem{fenics15}
M.~S. Aln{\ae}s, J.~Blechta, J.~Hake, A.~Johansson, B.~Kehlet, A.~Logg,
  C.~Richardson, J.~Ring, E.~Rognes M, and G.~N. Wells.
\newblock The fenics project version 1.5.
\newblock {\em Archive of Numerical Software}, 3, 2015.

\bibitem{axelsson13}
O.~Axelsson, P.~Boyanova, M.~Kronbichler, M.~Neytcheva, and X.~Wu.
\newblock Numerical and computational efficiency of solvers for two-phase
  problems.
\newblock {\em Comput. Math. Appl.}, 65:301--314, 2013.

\bibitem{barrett05}
J.W. Barrett, R.~Nurnberg, and V.~Styles.
\newblock Finite element approximation of a phase field model for void
  electromigration.
\newblock {\em SIAM J. Numer. Anal.}, 42(2):738--772, 2005.

\bibitem{BGL:2005:SaddlePoint}
M.~Benzi, G.H. Golub, and J.~Liesen.
\newblock {Numerical solution of saddle point problems}.
\newblock {\em Acta Numerica}, 14:1--137, 2005.

\bibitem{boyanova12}
P.~Boyanova, M.~Do-Quang, and M.~Neytcheva.
\newblock Efficient preconditioners for large scale binary {Cahn-Hilliard}
  models.
\newblock {\em Comput. Methods Appl. Math.}, 12(1):1--22, 2012.

\bibitem{BScott:2008:FEM}
S.C. Brenner and L.R. Scott.
\newblock {\em {The Mathematical Theory of Finite Element Methods $($Third
  Edition$)$}}.
\newblock Springer-Verlag, New York, 2008.

\bibitem{cahn61}
J.W. Cahn.
\newblock On spinodal decomposition.
\newblock {\em Acta Metall.}, 9:795, 1961.

\bibitem{cahn58}
J.W. Cahn and J.E. Hilliard.
\newblock Free energy of a nonuniform system. {I.} interfacial free energy.
\newblock {\em J. Chem. Phys.}, 28:258, 1958.

\bibitem{ceniceros07}
H.~D. Ceniceros and A.~M. Roma.
\newblock A nonstiff, adaptive mesh refinement-based method for the
  {Cahn--Hilliard} equation.
\newblock {\em J. Comput. Phys.}, 225:1849--1862, 2007.

\bibitem{choksi11}
R.~Choksi, M.~Maras, and J.~F. Williams.
\newblock 2d phase diagram for minimizers of a {Cahn-Hilliard} functional with
  long-range interactions.
\newblock {\em SIAM J. Appl. Dyn. Sys.}, 10(4):1344--1362, 2011.

\bibitem{diegel15}
A.~Diegel, X.~Feng, and S.~M. Wise.
\newblock Analysis of a mixed finite element method for a
  {Cahn-Hilliard-Darcy-Stokes} system.
\newblock {\em SIAM J. Numer. Anal.}, 53(1):127--152, 2015.

\bibitem{diegel16}
A.~Diegel, C.~Wang, and S.~M. Wise.
\newblock Stability and convergence of a second-order mixed finite element
  method for the {Cahn-Hilliard} equation.
\newblock {\em IMA J. Numer. Anal.}, 36:1867--1897, 2016.

\bibitem{du07}
Q.~Du, M.~Li, and C.~Liu.
\newblock Analysis of a phase field {Navier-Stokes} vesicle-fluid interaction
  model.
\newblock {\em Discrete Contin. Dyn. Syst. Ser. B}, 8(3):539, 2007.

\bibitem{ET:1999:Convex}
I.~Ekeland and R.~T{\'e}mam.
\newblock {\em Convex Analysis and Variational Problems}.
\newblock Classics in Applied Mathematics. Society for Industrial and Applied
  Mathematics (SIAM), Philadelphia, PA, 1999.

\bibitem{elliott86}
C.M. Elliott and S.~Zheng.
\newblock On the {Cahn-Hilliard} equation.
\newblock {\em Arch. Ration. Mech. Anal.}, 96:339--357, 1986.

\bibitem{ESW:2014:FEFIS}
H.C. Elman, D.J. Silvester, and A.J. Wathen.
\newblock {\em {Finite Elements and Fast Iterative Solvers: with Applications
  in Incompressible Fluid Dynamics}}.
\newblock Oxford University Press, Oxford, second edition, 2014.

\bibitem{eyre98}
D.~Eyre.
\newblock {Unconditionally Gradient Stable Time Marching the Cahn-Hilliard
  Equation}.
\newblock In J~W Bullard, R~Kalia, M~Stoneham, and L~Q Chen, editors, {\em
  Computational and Mathematical Models of Microstructural Evolution},
  volume~53, pages 1686--1712, Warrendale, PA, USA, 1998. Materials Research
  Society.

\bibitem{feng06}
X.~Feng.
\newblock Fully discrete finite element approximations of the
  {Navier-Stokes-Cahn-Hilliard} diffuse interface model for two-phase fluid
  flows.
\newblock {\em SIAM J. Numer. Anal.}, 44:1049--1072, 2006.

\bibitem{Greenbaum:1997:Iterative}
A.~Greenbaum.
\newblock {\em {Iterative Methods for Solving Linear Systems}}.
\newblock SIAM, Philadelphia, 1997.

\bibitem{Hackbusch:1985:MMA}
W.~Hackbusch.
\newblock {\em {Multi-grid Methods and Applications}}.
\newblock Springer-Verlag, Berlin-Heidelberg-New York-Tokyo, 1985.

\bibitem{kay06}
D.~Kay and R.~Welford.
\newblock A multigrid finite element solver for the {Cahn--Hilliard} equation.
\newblock {\em J. Comput. Phys.}, 212:288--304, 2006.

\bibitem{kim07}
J.~Kim.
\newblock A numerical method for the {Cahn--Hilliard} equation with a variable
  mobility.
\newblock {\em Commun. Nonlinear Sci. Numer. Simul.}, 12:1560--1571, 2007.

\bibitem{kim04}
J.~Kim, K.~Kang, and J.~Lowengrub.
\newblock Conservative multigrid methods for {Cahn-Hilliard} fluids.
\newblock {\em J. Comput. Phys.}, 193:511--543, 2004.

\bibitem{lee14}
C.~Lee, D.~Jeong, J.~Shin, Y.~Li, and J.~Kim.
\newblock A fourth-order spatial accurate and practically stable compact scheme
  for the {Cahn--Hilliard} equation.
\newblock {\em Physica A}, 409:17--28, 2014.

\bibitem{lee14b}
D.~Lee, J.~Huh, D.~Jeong, J.~Shin, A.~Yun, and J.~Kim.
\newblock Physical, mathematical, and numerical derivations of the
  {Cahn--Hilliard} equation.
\newblock {\em Comp Mater Sci}, 81:216--225, 2014.

\bibitem{lee02a}
H.~G. Lee, J.~S. Lowengrub, and J.~Goodman.
\newblock Modeling pinchoff and reconnection in a {Hele-Shaw} cell. {I.} the
  models and their calibration.
\newblock {\em Phys. Fluids}, 14:492--513, 2002.

\bibitem{MW:2011:SaddlePoint}
K.-A. Mardal and R.~Winther.
\newblock Preconditioning discretizations of systems of partial differential
  equations.
\newblock {\em Numer. Linear Algebra Appl.}, 18:1--40, 2011.

\bibitem{shin13}
J.~Shin, S.~Kim, D.~Lee, and J.~Kim.
\newblock A parallel multigrid method of the {Cahn--Hilliard} equation.
\newblock {\em Comp Mater Sci}, 71:89--96, 2013.

\bibitem{stogner08}
R.~H. Stogner, G.~F. Carey, and B.~T. Murray.
\newblock Approximation of {Cahn--Hilliard} diffuse interface models using
  parallel adaptive mesh refinement and coarsening with {C1} elements.
\newblock {\em Internat. J. Numer. Methods Engrg.}, 76:636--661, 2008.

\bibitem{Temam:1988:DynamicalSystem}
R.~Temam.
\newblock {\em {Infinite-Dimensional Dynamical Systems in Mechanics and
  Physics}}.
\newblock Springer-Verlag, New York, 1988.

\bibitem{tierra15}
G~Tierra and F~Guill{\'e}n-Gonz{\'a}lez.
\newblock Numerical methods for solving the {Cahn--Hilliard} equation and its
  applicability to related energy-based models.
\newblock {\em Arch Comput Method E}, 22:269--289, 2015.

\bibitem{TOS:2001:MG}
U.~Trottenberg, C.~Oosterlee, and A.~Sch{\"u}ller.
\newblock {\em {Multigrid}}.
\newblock Academic Press, San Diego, 2001.

\bibitem{van09}
S.~van Teeffelen, R.~Backofen, A.~Voigt, and H.~L{\"o}wen.
\newblock Derivation of the phase-field-crystal model for colloidal
  solidification.
\newblock {\em Phys. Rev. E}, 79:051404, 2009.

\bibitem{felicity}
S.W. Walker.
\newblock {FELICITY}: {F}inite {EL}ement {I}mplementation and {C}omputational
  {I}nterface {T}ool for {Y}ou.
\newblock http://www.mathworks.com/matlabcentral/fileexchange/31141-felicity.

\bibitem{wang16}
W.~Wang, L.~Chen, and J.~Zhou.
\newblock Postprocessing mixed finite element methods for solving
  {Cahn--Hilliard} equation: Methods and error analysis.
\newblock {\em J Sci Comput}, 67:724--746, 2016.

\bibitem{wise10}
S.M. Wise.
\newblock Unconditionally stable finite difference, nonlinear multigrid
  simulation of the {Cahn-Hilliard-Hele-Shaw} system of equations.
\newblock {\em J. Sci. Comput.}, 44:38--68, 2010.

\bibitem{zhou15}
J.~Zhou, L.~Chen, Y.~Huang, and W.~Wang.
\newblock An efficient two-grid scheme for the {Cahn-Hilliard} equation.
\newblock {\em Commun Comput Phys}, 17:127--145, 2015.

\end{thebibliography}
	\end{document}